\newtheorem{theorem}{Theorem}[section]
\newtheorem{lemma}[theorem]{Lemma}
\theoremstyle{definition}
\newtheorem{prop}[theorem]{Proposition}
\newtheorem*{concluding}{Concluding remarks}
\theoremstyle{remark}
\newtheorem*{remark}{Remark}
\definecolor{green}{rgb}{0.1,0.62,0.0}
\definecolor{orange}{rgb}{0.9,0.3,0.0}
\definecolor{deepblue}{rgb}{0.0,0.0,0.7}
\definecolor{blue}{rgb}{0.0, 0.0, 1.0}
\definecolor{red}{rgb}{1.0, 0.0, 0.0}
\newcommand{\gk}[1]{{\color{deepblue}{#1}}}
\font\myfont=cmbx12 at 11pt
\font\myfond=cmr12 at 10pt
\title{\myfont SEPARATION PROPERTY FOR THE NONLOCAL CAHN-HILLIARD-BRINKMAN SYSTEM WITH A SINGULAR POTENTIAL AND DEGENERATE MOBILITY}
\author{\myfond SHEETAL DHARMATTI \thanks{sheetal@iisertvm.ac.in \; School of Mathematics, Indian Institute of Science Education and Research, Thiruvananthapuram, Maruthamala PO, Vithura, Thiruvananthapuram, Kerala, 695551, INDIA}  \;  and  \;  GREESHMA K \thanks{ greeshmak21@iisertvm.ac.in \; School of Mathematics, Indian Institute of Science Education and Research, Thiruvananthapuram, Maruthamala PO, Vithura, Thiruvananthapuram, Kerala, 695551, INDIA,\\ \textbf{Acknowledgements :} Greeshma K would like to thank the Department of Science and Technology (DST), India, for the Innovation in Science Pursuit for Inspired Research (INSPIRE) Fellowship (IF210199). The work of Sheetal Dharmatti is supported by SERB  grant SERB-CRG/2021/008278 of the Government of India.
	}		
}
\date{}
\numberwithin{equation}{section}
\begin{document}
\maketitle

This work studies the nonlocal Cahn-Hilliard-Brinkman (CHB) system, which models the phase separation of a binary fluid in a bounded domain and porous media. We focus on the system with a singular potential (logarithmic form) and a degenerate mobility function. The singular potential introduces challenges due to the blow-up of its derivatives near pure phases, while the degenerate mobility complicates the analysis. Our main result is the separation property, which ensures that the solution eventually stays away from the pure phases. We adopt a new method, inspired by the De Giorgi’s iteration, introduced for the two-dimensional Cahn-Hilliard equation with constant mobility.  This work extends previous results and provides a general approach for proving the separation property for similar systems.
\\
\\
\textbf{Mathematics Subject Classification.} 35B40, 35B45, 35K55, 76S05, 76D99, 76T99
\\ 
\\
\textbf{Keywords and phrases}. Cahn-Hilliard equation, Brinkman equation, Logarithmic potential, Separation property, Singular potential, Degenerate mobility
\setcounter{section}{1}

\section*{Introduction}
The Cahn-Hilliard-Brinkman (CHB) system describes the behaviour of two immiscible, incompressible and isothermal fluids in a bounded domain and porous media. This system consists of the Cahn-Hilliard equation for the relative concentration of fluids and the Brinkman equation, a modified Darcy's law for fluid velocity. Let $\Omega$ be an open, bounded subset of $\mathbb{R}^2$. Consider the relative concentration of two fluids defined as $\varphi =(\varphi_1-\varphi_2)$, where $\varphi_i$ represents the concentration of each fluid. The variable $\varphi$ lies in the interval $[-1,1]$ with the extreme values $\pm 1$ corresponding to pure phases. The nonlocal CHB system is given by,
\begin{align}
\varphi'+\nabla\cdot(\ \textbf{u}\varphi )\ &=\ \nabla\cdot( m(\varphi)\nabla\mu),     \; \text{in}\;\; \Omega\times[0,T],\label{eq1}
\\\mu &=\ a\varphi- J \ast\varphi + F '\left(\varphi\right),    \; \text{in}\;\; \Omega\times[0,T],\label{eq2}
\\-\nabla\cdot\left(\nu\left(\varphi\right)\nabla \textbf{u} \right) + \eta \textbf{u} + \nabla \pi &=\ \mu\nabla\varphi + \textbf{h},   \; \text{in}\;\; \Omega\times[0,T],\label{eq3}
\\ \nabla\cdot \textbf{u}  & =  0, \, \text{in}  \;\; \Omega\times[0,T].\label{eq4}
\\ \frac{\partial \mu}{\partial n} &= 0 ,\;\; \text{on} \; \partial\Omega\times[0,T], \label{eq05}
\\\textbf{u} &=0,\; \; \text{on} \;   \partial\Omega\times[0,T],\label{eq06}
\\ \varphi\left(. , 0 \right) &=\varphi_{0},\; \; \text{in} \; \; \Omega.\label{eq07}
\end{align}
Here, $\mu$ represents the chemical potential, and the system is nonlocal due to the convolution term involving $J$, the interaction kernel. Let $m$, $\nu$, and $\eta$ denote the mobility, viscosity and permeability of the fluid, respectively. The term $F$ is the potential, and $h$ is an external forcing term and $a$ is defined as $a(x) =\int_{\Omega}J(x-y)dy$. The system enforces incompressibility, a no-slip boundary condition for $\textbf{u}$ and a no-flux condition for $\mu$. 

In most of the practical problems, the potential appears to be singular. Due to the difficulty in handling this term, there are works in which the potential is regularised to a polynomial function satisfying certain growth conditions on its derivatives. Consider a primary example of a singular potential, the logarithmic potential defined by, 
\begin{equation}\label{eq09}
F_{\log}(r)=((1+r) \log(1+r)+(1-r) \log(1-r)),\quad  r \in(-1,1).
\end{equation}
where $0<\theta <\theta_c$. This potential is singular due to the blow-up of its derivatives near the pure phases $\pm 1$.  Further, we consider a degenerate mobility function,
\begin{equation}\label{eq1.0}
    m(r) = (1-r^2), \quad  r \in(-1,1).
\end{equation}
The local as well as non-local CHB systems have been studied for well-posedness and regularity under various assumptions on potential and mobility \cite{DPG, MCA, SDM, SDK}. The existence of a weak solution for the system \eqref{eq1}-\eqref{eq07} with constant mobility and singular potential is established in \cite{CFG}, while \cite{SDK} addresses the existence and uniqueness of a strong solution under higher regularity assumptions in the initial data. In addition, they have also studied the distributional-type optimal control problem.

In \cite{MCA}, the authors proved the existence of a strong solution for the local CHB system with a logarithmic potential and constant mobility. Moreover, the authors have shown that the solution $\varphi$ will eventually be contained in a closed subinterval of $(-1,1)$, called the separation property. This property holds for dimensions 2 and 3 and was proven using the Trudinger-Moser inequality. 

The phase field equation, a variant of the Cahn-Hilliard equation, has been studied in \cite{LSP, LPH}. In \cite{LSP}, the convergence of the solution to a single equilibrium is established by exploiting a generalised Lojasiewicz-Simon inequality. They proved separation property to verify the assumptions of Lojasiewicz-Simon inequality for a potential,
\begin{equation}\label{eq08}
F(r)= r\log r + (1-r)\log(1-r), \quad  r \in(0,1).
\end{equation}
and mobility, 
\begin{equation}\label{eq1.11}
m(r)= \frac{1}{r(1-r)}, \quad  r \in(0,1).
\end{equation}
which satisfies $mF''$, a constant. The proof employs the Alikakos-Moser iterative method. Similar results under weaker assumptions on the potential were provided in \cite{LPH} with a simplified proof. The separation property for the nonlocal Cahn-Hilliard equation is analyzed in \cite{GGF} for potential \eqref{eq09} and mobility \eqref{eq1.0} using similar techniques as in \cite{LSP}.

A recent work \cite{GGA} introduced a new approach based on the De Giorgi iteration method to prove the separation property for  two-dimensional Cahn-Hilliard equation with constant mobility with local and nonlocal potentials . This method allows for a broader class of potentials provided that they satisfy a blow-up condition of $F''$. Additionally, this method enables us to evaluate the separation scales explicitly. The result for the Cahn-Hilliard equation has been extended to dimension three in \cite{POI}.

In this work, we prove the separation property for the solution of the system \eqref{eq1}-\eqref{eq07} for a logarithmic potential described in \eqref{eq09}  with mobility \eqref{eq1.0} and for the potential \eqref{eq08} with mobility \eqref{eq1.11}. We adopt the framework introduced in \cite{GGA} to prove the result. The major difficulty in implementing the method from \cite{GGA} is due to the nonconstant, degenerate mobility function, and the main hurdle is that
the method requires a global estimate for the $L^p$ norm of $F'$ or $F''(s)$ uniform in time. Thus, our method is applicable for any potential that satisfies assumptions  $\textbf{[A1]}$- $\textbf{[A4]}$, detailed in the next section, and satisfies uniform in time estimate for the $L^p$ norm of $F'(s)$ or $F''(s)$.

The paper is organized as follows. In the next section, we will introduce the mathematical setup and preliminary results required. In section 3, we will state and prove our main result, separation property for the system \eqref{eq1}-\eqref{eq07}.

\section{Mathematical setup and preliminary results}
Let $\Omega \subseteq \mathbb{R}^2$, be a bounded open domain with a smooth boundary $\partial\Omega.$ Let H and V represent spaces $L^2(\Omega)$ and $H^1(\Omega)$ respectively. The norm in $H$ and $V$ are denoted by $\lVert \,\cdot\,\rVert$ and $\lVert \,\cdot\,\rVert_{V}$ respectively. The duality pairing between spaces $V$ and $V'$ is denoted by $\langle.,.\rangle$. We denote the closure of divergence-free space, $\mathcal{V } :=\{ u\in (C_0^\infty(\Omega)^2; \nabla. u = 0\}$ in $L^2(\Omega;\mathbb{R}^2)$ and $H^1(\Omega;\mathbb{R}^2)$ by $\mathbb{G}_{div}$ and $\mathbb{V}_{div}$ respectively. Then we have the following characterisation from \cite{TEM}, 
\begin{align*}
\mathbb{G}_{div} &= \{ \textbf{u}\in L^2(\Omega;\mathbb{R}^2): \text{div}(\textbf{u})=0, \textbf{u}.\textbf{n}|_{\partial \Omega}=0\},\\
 \mathbb{V}_{div} &= \{ \textbf{u}\in H^1_0(\Omega;\mathbb{R}^2): \text{div}(\textbf{u})=0\}.
\end{align*}
Let $\lVert \cdot \rVert$, $\lVert \cdot \rVert_{\mathbb{V}_{div}}$ be the norm in $\mathbb{G}_{div}$ and $\mathbb{V}_{div}$ respectively. Let $\mathbb{V}_{div}'$ be the dual space of $\mathbb{V}_{div}$ and $\langle.,.\rangle$ denote the duality pairing between $\mathbb{V}_{div}$ and $\mathbb{V}_{div}'.$  We introduce some more spaces useful for further analysis.
\begin{align*}
V_0 &=  \{ u\in V ; \Bar{u} =0\}, V_0' =  \{ f\in V' ; \Bar{f} =0\}, \\
V_2 &= \{ u\in H^2(\Omega) ; \frac{\partial u}{\partial n} =0\}.
\end{align*}
We introduce a linear operator, $A: V \rightarrow V'$ by,
\begin{equation*}
    \langle Au, v\rangle = \int_{\Omega}\nabla u \cdot \nabla v dx, \hspace{.5cm}\forall u, v \in V.
\end{equation*}
The restriction of $A$ to $V_0$ is an isomorphism from $V_0$ to $V_0'$. We define the inverse map, $\mathcal{N}: H \rightarrow V_2,$ such that  $\mathcal{N}(f) = v,$ where $v$ is a solution of $\Delta v = f; \; \frac{\partial v}{\partial n} = 0.$ On account of this definition the following properties holds.
\begin{align*}
    \langle Au, \mathcal{N}f\rangle &= \langle u, f\rangle, \forall u\in V , f\in V_0',  \\
   \langle f, \mathcal{N}g\rangle &= \langle g, \mathcal{N}f\rangle, \forall f, g\in V_0'.
\end{align*}
In addition note that, norms $\|f \|_{V_0'} \text { and }\|\nabla\mathcal{N}f\|$ are equivalent.

We study the non-local Cahn-Hilliard-Brinkman (CHB) system with a singular type potential and non-constant mobility under the following set of assumptions. 
\begin{enumerate}[font={\bfseries},label={A\arabic*.}]
    \item[{[N]}] The viscosity $\nu$ is Lipschitz continuous on $\mathbb{R}$ and there exist some $\nu_0, \nu_1 > 0$ such that  $\nu_0 \leq \nu(s) \leq \nu_1, \forall \ s\in \mathbb{R},$\; 
    and $\eta \in L^{\infty}(\Omega)$ is such that $\eta(x) \geq 0,$  for a.e $x \in \Omega.$
    \item[{[J]}] $J \in W^{1,1}(\Omega)$ is an even function and $a(x) := \int_{\Omega} J(x-y)dy \geq 0 $ for almost all $ x\in \Omega .$
    And \\$\sup_{x\in \Omega}\int_{\Omega}|J(x-y)|dy  < \infty,$ $b:= \sup_{x\in \Omega}\int_{\Omega}|\nabla J(x-y)|dy < \infty.$
\end{enumerate}
To treat the case of singular potential we need the following assumptions on the potential and mobility:
\begin{enumerate}[font={\bfseries},label={A\arabic*.}]
    \item[{[A1]}] The mobility $m\in C([-1,1])$ such that $m(s)\geq 0, \forall s \in [-1,1]$ and $m(s) =0$ iff $s=\pm 1.$ Further, there exists  $\epsilon_0 > 0$ such that $m$ is non-increasing in $[1-\epsilon_0,1]$ and non-decreasing in $[-1,-1+\epsilon_0].$
    \item[{[A2]}] The potential, $F = F_1 + F_2$ where $F_1\in C^2(-1,1)$ and $F_2\in C^2[-1,1].$ And 
    $\lambda:= mF_1''\in C([-1,1])$ is such that there exists some $\alpha_0 >0$ such that $\lambda(s)\geq \alpha_0, \forall s\in[-1,1].$
    \item[{[A3]}] $\lim_{s\rightarrow \pm 1} F'(s)= \pm \infty $. There exists  $\epsilon_0 > 0$ such that $F''$ is non-decreasing in $[1-\epsilon_0,1)$ and non-increasing in $(-1,-1+\epsilon_0]$.
    \item[{[A4]}] There exists  $\alpha_1 >0$ such that $m(s)(F''(s) + a(x)) \geq \alpha_1, \forall s\in(-1,1)$ and a.e $x \in \Omega$.
\end{enumerate}

We will state the wellposedness result for the system \eqref{eq1}-\eqref{eq07} with potential \eqref{eq09} from \cite[Proposition 2.1]{SDK} as a lemma.

\begin{lemma}[\textbf{Wellposedness of the nonlocal CHB system with singular potential and degenerate mobility}]
Let $\varphi_0\in H$ be such that $F(\varphi_0) \in L^1(\Omega)$, $\textbf{h}\in L^2(0,T;\mathbb{V}_{div}')$ and assumptions \emph{\textbf{[N]}},  \emph{\textbf{[J]}},  \emph{\textbf{[A1]}}- \emph{\textbf{[A4]}} hold. Further, assume there exists a function $M \in C^2(-1,1)$ such that $m(s)M''(s) =1, \forall s\in [-1,1]$ with $ M(0)= M'(0)= 0$ and satisfies the condition $M(\varphi_0) \in L^1(\Omega)$. Then there exists a weak solution $(\varphi, \textbf{u})$ for the nonlocal CHB system, \eqref{eq1}-\eqref{eq07} of the following regularity.
\begin{align*}
\varphi  &\in L^2(0,T;V) \cap C([0,T];H) \cap H^1(0,T;V'),\\
\textbf{u} &\in L^2(0,T;\mathbb{V}_{div}).
\end{align*} 
\end{lemma}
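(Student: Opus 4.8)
The plan is to construct the weak solution by a double regularization that simultaneously removes the singularity of $F'$ and the degeneracy of $m$, to extract uniform bounds from the intrinsic energy law of the coupled system, and to recover the solution by compactness. For a parameter $\delta\in(0,1)$ I would replace $m$ by a strictly positive $m_\delta\in C([-1,1])$ with $m_\delta\ge c(\delta)>0$ and $m_\delta\to m$ uniformly, and $F$ by $F_\delta\in C^2(\mathbb{R})$ with globally Lipschitz derivative, arranged so that the structural bounds $m_\delta F_{1,\delta}''\ge\alpha_0$ and $m_\delta(F_\delta''+a)\ge\alpha_1$ of \textbf{[A2]} and \textbf{[A4]} survive in the limit. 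The regularized system then has smooth, uniformly elliptic coefficients, and I would solve it by a Faedo--Galerkin scheme in the eigenbasis of $A$ for $\varphi$. The Brinkman equation \eqref{eq3}--\eqref{eq4} is treated as an elliptic solve: for given $(\varphi,\mu)$, the bilinear form $B(\textbf{u},\textbf{v})=\int_\Omega\nu(\varphi)\nabla\textbf{u}:\nabla\textbf{v}\,dx+\int_\Omega\eta\,\textbf{u}\cdot\textbf{v}\,dx$ on $\mathbb{V}_{div}$ is coercive by \textbf{[N]} and Poincar\'e's inequality, so Lax--Milgram yields a unique $\textbf{u}=\textbf{u}[\varphi,\mu]\in\mathbb{V}_{div}$ with the pressure eliminated on the divergence-free space; this reduces the Galerkin system to an ODE system, locally solvable by Cauchy--Lipschitz.

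\textbf{Energy estimate.} The crucial a priori bound comes from testing \eqref{eq1} with $\mu$ and \eqref{eq3} with $\textbf{u}$. Since $\mu$ is the variational derivative of
\begin{equation*}
E(\varphi)=\frac14\int_\Omega\int_\Omega J(x-y)\bigl(\varphi(x)-\varphi(y)\bigr)^2\,dx\,dy+\int_\Omega F(\varphi)\,dx,
\end{equation*}
one has $\langle\varphi',\mu\rangle=\tfrac{d}{dt}E(\varphi)$, while integrating the transport term by parts using \eqref{eq4} and \eqref{eq06} produces $+\int_\Omega\varphi\,\textbf{u}\cdot\nabla\mu\,dx$. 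Testing \eqref{eq3} with $\textbf{u}$ turns the coupling $\int_\Omega\mu\nabla\varphi\cdot\textbf{u}\,dx$ into $-\int_\Omega\varphi\,\textbf{u}\cdot\nabla\mu\,dx$, so these cross terms cancel and one obtains the dissipation identity
\begin{equation*}
\frac{d}{dt}E(\varphi)+\int_\Omega m(\varphi)|\nabla\mu|^2\,dx+\int_\Omega\nu(\varphi)|\nabla\textbf{u}|^2\,dx+\int_\Omega\eta|\textbf{u}|^2\,dx=\langle\textbf{h},\textbf{u}\rangle.
\end{equation*}
Young's inequality on the right, the coercivity $\nu\ge\nu_0$, and a lower bound for $E$ then give, uniformly in $\delta$, bounds for $\varphi$ in $L^\infty(0,T;H)$, for $\sqrt{m_\delta}\,\nabla\mu$ in $L^2(0,T;H)$, and for $\textbf{u}$ in $L^2(0,T;\mathbb{V}_{div})$.

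\textbf{Higher estimates, compactness and the limit.} To reach $\varphi\in L^2(0,T;V)$ I would use $m_\delta\nabla\mu=m_\delta(a+F_\delta'')\nabla\varphi+m_\delta(\varphi\nabla a-\nabla J*\varphi)$ together with \textbf{[A4]}, which controls $\nabla\varphi$ through the degenerate-weighted flux and the lower-order convolution terms, while the entropy $M$ with $mM''=1$, tested against $M'(\varphi)$ and using $M(\varphi_0)\in L^1$, prevents mass from concentrating at $\pm1$. Comparison in \eqref{eq1} then bounds $\varphi'$ in $L^2(0,T;V')$. By Aubin--Lions--Simon, $\varphi_\delta\to\varphi$ strongly in $L^2(0,T;H)$ and a.e., so $|\varphi|\le1$ a.e.; the energy and entropy bounds force $\varphi\in(-1,1)$ a.e., which makes $F'(\varphi)$ meaningful, and one passes to the limit in the flux, the transport and coupling terms, and in $\nu(\varphi_\delta)\nabla\textbf{u}_\delta$.

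\textbf{Main obstacle.} The hardest point is that $m$ degenerates exactly where $F'$ blows up, so the flux $m(\varphi)\nabla\mu$ cannot be handled by bounding $\nabla\mu$ alone---only the weighted quantity $\sqrt{m}\,\nabla\mu$ is under control. The resolution, and the reason the lemma requires the structural bound $m(F''+a)\ge\alpha_1$ and $M(\varphi_0)\in L^1$, is to carry out every estimate at the level of $m_\delta(a+F_\delta'')\nabla\varphi$ and of the entropy $M$ rather than through $\nabla\mu$, and then to identify the weak limit of $m_\delta(\varphi_\delta)\nabla\mu_\delta$ as the correct flux by combining the a.e.\ convergence of $\varphi_\delta$ with the uniform weighted bounds.
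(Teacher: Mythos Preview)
The paper does not prove this lemma at all: it is stated without proof and attributed to \cite[Proposition 2.1]{SDK} (see the sentence immediately preceding the lemma). So there is no ``paper's own proof'' to compare against; your proposal is being weighed against a citation, not an argument.

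That said, your sketch follows the standard route used for nonlocal Cahn--Hilliard systems with degenerate mobility (as in the references \cite{SDK}, \cite{CFG}, and the earlier Cahn--Hilliard--Navier--Stokes literature): regularize the degeneracy and the singularity jointly so that the structural inequalities \textbf{[A2]} and \textbf{[A4]} persist, run a Galerkin scheme with the Brinkman part solved elliptically via Lax--Milgram, derive the energy identity by the cancellation $(\textbf{u}\varphi,\nabla\mu)+(\mu\nabla\varphi,\textbf{u})=0$, extract the $L^2(0,T;V)$ bound on $\varphi$ from the rewriting $m\nabla\mu=m(a+F'')\nabla\varphi+m(\varphi\nabla a-\nabla J\!*\!\varphi)$ together with $m(a+F'')\ge\alpha_1$, and use the entropy $M$ with $mM''=1$ and $M(\varphi_0)\in L^1$ to keep $\varphi$ inside $(-1,1)$ a.e.\ in the limit. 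Your identification of the main obstacle---that only $\sqrt{m}\,\nabla\mu$ is controlled, so one must work with the composite flux $m(a+F'')\nabla\varphi$ and the entropy rather than with $\nabla\mu$ directly---is exactly right and is the reason the hypotheses on $M$ and \textbf{[A4]} appear in the statement. In short, your outline is consistent with the proof strategy in the cited source; the paper itself simply quotes the result.
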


We need uniform global estimates on  the solution of the system \eqref{eq1}-\eqref{eq07} to prove the separation property. The challenge comes because of  the degenerate mobility term. We obtained two estimates,  \eqref{eq.21} and \eqref{eq.22} under assumptions, $\textbf{[A1]}$-$\textbf{[A4]}$ on potential, $F$ and mobility, $m$. However, it is generally difficult to derive uniform estimates for \(\|F' (\varphi)\|_{L^p(\Omega)}\) with these assumptions. Therefore, we focus on the logarithmic potential \eqref{eq09}. For this specific potential, we derive estimate \eqref{eq3.010}.

\subsection{Uniform global estimates}
In this section we will derive a dissipative energy inequality satisfied by the solution to the system \eqref{eq1}-\eqref{eq07} given by Lemma 2.1. Moreover, we derive a global estimate for  $L^1$ norm of $F'(\varphi)$ for the potential \eqref{eq09}, which is crucial in the proof of the separation property.

\begin{prop}
    Let $h\in L^2_{loc}([0,\infty);\mathbb{G}_{div})$ and $(\varphi, \textbf{u})$ be the unique solution of \eqref{eq1}-\eqref{eq07}. Then there exists a constant, $C >0$ such that,
    \begin{align}\label{eq.21}
        \sup_{t \geq 0} \|\varphi(t)\|^2 + \int\limits_{t}^{t+1}\big( \|\nabla\varphi\|^2 + \|\textbf{u}\|_{\mathbb{V}_{div}}^2 + \|m(\varphi)\nabla\mu\|^2  \big) \leq C,  \hspace{1cm}\forall t\geq 0.
    \end{align}
\end{prop}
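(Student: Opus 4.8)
The plan is to establish a differential energy identity and then exploit the coercivity assumption \textbf{[A4]} to recover the gradient of $\varphi$ from the degenerate flux $m(\varphi)\nabla\mu$. I would introduce the nonlocal free energy
$$\mathcal{E}(\varphi) = \frac{1}{4}\int_\Omega\int_\Omega J(x-y)|\varphi(x)-\varphi(y)|^2\,dx\,dy + \int_\Omega F(\varphi)\,dx,$$
so that $\mu = a\varphi - J\ast\varphi + F'(\varphi)$ is precisely its first variation. First I would test \eqref{eq1} with $\mu$ and integrate by parts, using the no-flux condition \eqref{eq05}, to obtain $\frac{d}{dt}\mathcal{E}(\varphi) = \int_\Omega \varphi\,\textbf{u}\cdot\nabla\mu - \int_\Omega m(\varphi)|\nabla\mu|^2$; simultaneously I would test \eqref{eq3} with $\textbf{u}$, where the pressure term drops out by \eqref{eq4} and \eqref{eq06}, to get $\int_\Omega \nu(\varphi)|\nabla\textbf{u}|^2 + \int_\Omega \eta|\textbf{u}|^2 = \int_\Omega \mu\,\nabla\varphi\cdot\textbf{u} + \langle\textbf{h},\textbf{u}\rangle$. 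The crucial cancellation is that, since $\nabla\cdot\textbf{u}=0$ and $\textbf{u}\cdot n=0$, one has $\int_\Omega \mu\,\nabla\varphi\cdot\textbf{u} = -\int_\Omega \varphi\,\textbf{u}\cdot\nabla\mu$, so adding the two identities eliminates the transport terms and yields the energy identity
$$\frac{d}{dt}\mathcal{E}(\varphi) + \int_\Omega m(\varphi)|\nabla\mu|^2 + \int_\Omega \nu(\varphi)|\nabla\textbf{u}|^2 + \int_\Omega \eta|\textbf{u}|^2 = \langle\textbf{h},\textbf{u}\rangle.$$

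Next I would estimate the forcing by $\langle\textbf{h},\textbf{u}\rangle \leq \|\textbf{h}\|\,\|\textbf{u}\| \leq \frac{\nu_0}{2}\|\nabla\textbf{u}\|^2 + C\|\textbf{h}\|^2$, absorbing half of the viscous dissipation via the lower bound $\nu\geq\nu_0$ from \textbf{[N]} and the Poincar\'e inequality on $\mathbb{V}_{div}$. Since $F$ is singular, the solution obeys the pointwise constraint $\varphi\in[-1,1]$ a.e., whence $\sup_{t\geq0}\|\varphi(t)\|^2\leq|\Omega|$, which already settles the first term in the claim; moreover $F$ is bounded on $[-1,1]$ and $|\int_\Omega(J\ast\varphi)\varphi|\leq \|J\|_{L^1}\|\varphi\|^2$, so $\mathcal{E}(\varphi(t))$ is bounded above and below uniformly in $t$. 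Integrating the differential inequality over $[t,t+1]$ and combining this uniform energy bound with the translation boundedness $\sup_{t\geq0}\int_t^{t+1}\|\textbf{h}\|^2<\infty$ then controls $\int_t^{t+1}\|m(\varphi)\nabla\mu\|^2$ and $\int_t^{t+1}\|\nabla\textbf{u}\|^2$, and the latter controls $\int_t^{t+1}\|\textbf{u}\|_{\mathbb{V}_{div}}^2$ again by Poincar\'e.

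The main obstacle, and the point where the degenerate mobility bites, is recovering $\int_t^{t+1}\|\nabla\varphi\|^2$, since the dissipation delivers only the weighted quantity $m(\varphi)\nabla\mu$ rather than $\nabla\mu$ itself. Here I would differentiate \eqref{eq2} to write $\nabla\mu = (a+F''(\varphi))\nabla\varphi + (\nabla a)\varphi - (\nabla J)\ast\varphi$, multiply by $m(\varphi)$, and invoke \textbf{[A4]}, namely $m(\varphi)(a+F''(\varphi))\geq\alpha_1$, to obtain the pointwise bound $\alpha_1|\nabla\varphi| \leq |m(\varphi)\nabla\mu| + m(\varphi)\big(|(\nabla a)\varphi| + |(\nabla J)\ast\varphi|\big)$. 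Since $m$ is bounded on $[-1,1]$, $|\nabla a|\leq b$, and $\|(\nabla J)\ast\varphi\|\leq b\|\varphi\|$ by \textbf{[J]}, squaring and integrating gives $\|\nabla\varphi\|^2\leq C\big(\|m(\varphi)\nabla\mu\|^2 + \|\varphi\|^2\big)$, whose right-hand side is already under control. Collecting the four bounds yields the asserted inequality.

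Finally, because $(\varphi,\textbf{u})$ is only a weak solution with a singular potential, the chain rule $\frac{d}{dt}\int_\Omega F(\varphi)=\langle F'(\varphi),\varphi'\rangle$ and the use of $\mu$ as a test function are not directly justified. I would therefore derive these estimates first on the regularized (Yosida/Galerkin) approximations underlying Lemma 2.1, where all the manipulations are legitimate, and then pass to the limit by weak lower semicontinuity, the constant $C$ being independent of the approximation parameter.
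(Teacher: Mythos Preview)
Your argument is correct but follows a genuinely different route from the paper. The paper never tests \eqref{eq1} with $\mu$; instead it tests with $\varphi$, so that the convective term vanishes by incompressibility and the diffusion term becomes $(m(\varphi)\nabla\mu,\nabla\varphi)=\int_\Omega m(\varphi)(a+F''(\varphi))|\nabla\varphi|^2+\text{lower order}$, with \textbf{[A4]} producing the coercive term $\alpha_1\|\nabla\varphi\|^2$ directly. This gives $\tfrac{d}{dt}\|\varphi\|^2+\alpha_1\|\nabla\varphi\|^2\le C$; Gronwall on $\varphi-\bar\varphi$ plus mass conservation yields $\sup_t\|\varphi\|^2$, integration over $[t,t+1]$ yields the gradient bound, and the bounds on $\|\textbf{u}\|_{\mathbb{V}_{div}}$ and $\|m(\varphi)\nabla\mu\|$ are then read off \emph{afterwards}, separately, from the Brinkman equation and from the explicit form $m\nabla\mu=m(a+F'')\nabla\varphi+m[(\nabla a)\varphi-(\nabla J)\ast\varphi]$. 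You instead obtain $\|m(\varphi)\nabla\mu\|$ first via the full energy identity and recover $\|\nabla\varphi\|$ last, essentially by the same pointwise use of \textbf{[A4]} run in the opposite direction. The paper's route is marginally more robust under the stated hypotheses: your claim that $\mathcal{E}(\varphi(t))$ is uniformly bounded above relies on $F$ being bounded on $[-1,1]$, which is true for the logarithmic potentials \eqref{eq09} and \eqref{eq08} but is not a consequence of \textbf{[A1]}--\textbf{[A4]} alone (only $F_1\in C^2(-1,1)$ is assumed). Testing with $\varphi$ bypasses the free energy entirely. On the other hand, your approach exhibits the natural dissipative structure in one stroke and couples the two equations through the Korteweg cancellation, whereas the paper handles them independently.
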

\begin{proof}
Consider the following equation obtained by testing \eqref{eq1} with $\varphi$,
\begin{align}\label{eq2.1}
    \frac{1}{2}\frac{d}{dt}\|\varphi\|^2 + ((ma +\lambda)\nabla\varphi, \nabla\varphi)+ (m(\varphi\nabla a -\nabla J* \varphi),\nabla\varphi) = 0.
\end{align}
Applying H\"older's and Young's inequalities we obtain,
\begin{align*}
    (m(\varphi)(\varphi\nabla a -\nabla J* \varphi),\nabla\varphi) &\leq \|m\|_{L^\infty}(\|\nabla a\|_{L^\infty} + \|\nabla J\|_{L^1})\|\nabla\varphi\||\Omega|^\frac{1}{2} \leq \frac{\alpha_1}{2}\|\nabla\varphi\|^2 + C,
\end{align*}
where $C>0$ is a constant independent of $\varphi$. Substitute back in \eqref{eq2.1} and use \textbf{[A4]}, we get,
\begin{align}\label{eq2.2}
     \frac{1}{2}\frac{d}{dt}\|\varphi\|^2 + \frac{\alpha_1}{2}\|\nabla\varphi\|^2 \leq C.
\end{align}
Note that, $ (\varphi, \varphi')= (\varphi-\Bar{\varphi}, \varphi')= (\varphi-\Bar{\varphi}, (\varphi-\Bar{\varphi})')$.  Applying Poincar\'e inequality, it follows that,
\begin{align*}
     \frac{1}{2}\frac{d}{dt}\|\varphi- \Bar{\varphi}\|^2 + \frac{\alpha_1}{2}\|\varphi-\Bar{\varphi}\|^2  \leq C.
\end{align*}
Using Gronwall's inequality, we have,
\begin{align*}
    \|\varphi(t)- \bar{\varphi}(t) \|^2 \leq \|\varphi_0- \bar{\varphi_0}\|^2e^{-\alpha_1t}+ \frac{2C}{\alpha_1}, \,\hspace{.5cm} \forall t\geq 0.
\end{align*}
By conservation of mass, ${\bar\varphi(t)}={\bar \varphi_0}$. Therefore,
\gk{ \begin{align}\label{eq2.3}
    \sup_{t\geq 0}\|\varphi(t)\|^2 \leq C.
\end{align}
}
Integrating \eqref{eq2.2} over $[t, t+1]$, applying the estimate \eqref{eq2.3} leads to,
\begin{align}\label{eq2.4}
   \int\limits_{t}^{t+1} \|\nabla\varphi\|^2  &\leq C,\hspace{.5cm} \forall t \geq 0,
\end{align}
Testing \eqref{eq3} with $\textbf{u}$, and estimating each term by aplying H\"older's, Young's and Korn's inequalities, we get,\begin{align*}
   \|\textbf{u}\|_{\mathbb{V}_{div}}^2 &\leq C(\|\nabla\varphi\|^2 +\|\textbf{h}\|_{\mathbb{G}_{div}}^2). 
\end{align*}
Therefore by \eqref{eq2.4}, 
\begin{align}\label{eq2.04}
    \int\limits_{t}^{t+1} \|\textbf{u}\|_{\mathbb{V}_{div}}^2 &\leq C,\hspace{.5cm} \forall t \geq 0,
\end{align}
Consider the inner-product, $(m(\varphi)\nabla\mu, \psi) = ((ma+\lambda)\nabla\varphi, \psi) + (m(\varphi)((\nabla a)\varphi -\nabla J*\varphi), \psi)$. Applying the H\"older's inequality, we get,
\begin{align*}
    |(m(\varphi)\nabla\mu, \psi)| \leq C \|\varphi\|_{V}\|\psi\|. 
\end{align*}
Consequently, using \eqref{eq2.4} we have,
\begin{align}\label{eq2.6}
    \|m(\varphi)\nabla\mu\|_{L^2(t,t+1; H)} &\leq C,\hspace{.5cm} \forall t \geq 0.
\end{align}
Further, from \eqref{eq1} and \eqref{eq2.6}
\begin{align}\label{eq2.7}
    \|\varphi'\|_{L^2(t,t+1; V')} &\leq C,\hspace{.5cm} \forall t \geq 0.
\end{align}
\end{proof}

\begin{prop}
    Let $(\varphi, \textbf{u})$ be the unique solution of \eqref{eq1}-\eqref{eq07}. Then for any $\tau>0$ there exists a constant $C>0$ such that the following estimate holds true.
    \begin{align}\label{eq.22}
        \sup_{t\geq \tau}\|\varphi'(t)\|_{V'} + \|\varphi'\|_{L^2(t,t+1; H)} +  \sup_{t\geq \tau}\|m(\varphi)\nabla\mu\| + \sup_{t\geq \tau}\|\varphi\|_{V}  \leq C,  \hspace{1cm}\forall t\geq \tau.
    \end{align}
    \vspace{-.5cm}\text{ Additionally, for the potential \eqref{eq09},}
    \begin{align}\label{eq3.010}
        \sup_{t\geq \tau}\|F'(\varphi)\|_{L^1(\Omega)} \leq C.
    \end{align}
\end{prop}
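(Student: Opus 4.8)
The plan is to promote the integral-in-time bounds of \eqref{eq.21} and \eqref{eq2.7} to the pointwise-in-time bounds in \eqref{eq.22} by means of the \emph{uniform Gronwall lemma}; the threshold $\tau>0$ reflects the short transient needed before the extra regularity becomes available. The natural quantity to track is $\|\varphi'\|_{V_0'}^2$. Since $\overline{\varphi'}=0$ by conservation of mass, $\mathcal N\varphi'$ is well defined and $\langle\varphi',\mathcal N\varphi'\rangle$ is equivalent to $\|\varphi'\|_{V_0'}^2$, so I would differentiate \eqref{eq1} in time and test with $\mathcal N\varphi'$. Writing the flux as $m(\varphi)\nabla\mu=\Lambda\nabla\varphi+\mathbf R$ with $\Lambda:=m(a+F'')$ and $\mathbf R:=m(\varphi\nabla a-\nabla J*\varphi)$, and using the defining property of $\mathcal N$ in the form $\int_\Omega\Lambda\nabla\varphi'\cdot\nabla\mathcal N\varphi'=\int_\Omega\Lambda|\varphi'|^2-\int_\Omega\varphi'\,\nabla\Lambda\cdot\nabla\mathcal N\varphi'$, this produces $\tfrac12\tfrac{d}{dt}\|\varphi'\|_{V_0'}^2+\int_\Omega\Lambda|\varphi'|^2\,dx=(\text{remainder})$. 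Assumption \textbf{[A4]} is exactly what turns the second term into a genuine damping, $\int_\Omega\Lambda|\varphi'|^2\ge\alpha_1\|\varphi'\|^2$.

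The crux is that the remainder is benign. Differentiating the flux would naively create $F'''(\varphi)$, which blows up at $\pm1$; this is where the degeneracy of $m$ saves the argument, since all occurrences of $F'''$ enter only through $(mF'')'=m'F''+mF'''$, which is bounded (indeed constant for \eqref{eq09}, where $mF''\equiv2$) by \textbf{[A2]}, so that $\nabla\Lambda=[m'a+(mF'')']\nabla\varphi+m\nabla a$ and $\partial_t\Lambda=[m'a+(mF'')']\varphi'$ carry no singular factor. The convective remainder, which after integration by parts reads $\int_\Omega(\mathbf u\varphi)'\cdot\nabla\mathcal N\varphi'\,dx$, requires a bound on $\mathbf u'$; here I would use that in the quasi-stationary equation \eqref{eq3} the forcing $\mu\nabla\varphi$ coincides, modulo a gradient absorbed into $\pi$, with the regular field $-\tfrac12\varphi^2\nabla a+\varphi(\nabla J*\varphi)$, so that testing the time-differentiated \eqref{eq3} with $\mathbf u'$ controls $\|\mathbf u'\|_{\mathbb{V}_{div}}$ by $\varphi$ and $\varphi'$ alone, with no $F'$ or $\nabla\mu$. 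The genuinely delicate remainder terms are the trilinear ones of the type $\int_\Omega\varphi'\,\nabla\varphi\cdot\nabla\mathcal N\varphi'$, which I would estimate by two-dimensional Gagliardo--Nirenberg/Ladyzhenskaya inequalities together with the elliptic bound $\|\mathcal N\varphi'\|_{H^2}\le C\|\varphi'\|$, absorbing a fraction of $\alpha_1\|\varphi'\|^2$ and leaving $\tfrac{d}{dt}\|\varphi'\|_{V_0'}^2+\alpha_1\|\varphi'\|^2\le g\,\|\varphi'\|_{V_0'}^2+h$ with $\int_t^{t+1}(g+h)\le C$ (all such integrals being controlled by \eqref{eq.21}). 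The uniform Gronwall lemma, fed with $\int_t^{t+1}\|\varphi'\|_{V'}^2\le C$ from \eqref{eq2.7}, then yields $\sup_{t\ge\tau}\|\varphi'\|_{V'}\le C$, and integrating the damping gives $\|\varphi'\|_{L^2(t,t+1;H)}\le C$.

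The remaining two bounds follow pointwise and algebraically. Testing \eqref{eq1} at a fixed time with $\varphi-\bar\varphi$, the convective term vanishes, $\int_\Omega\mathbf u\varphi\cdot\nabla\varphi=\tfrac12\int_\Omega\mathbf u\cdot\nabla(\varphi^2)=0$, because $\mathbf u$ is divergence-free with $\mathbf u\cdot\mathbf n=0$, so that $\int_\Omega m\nabla\mu\cdot\nabla\varphi=-\langle\varphi',\varphi\rangle$. Inserting $m\nabla\mu=\Lambda\nabla\varphi+\mathbf R$ and using $\Lambda\ge\alpha_1$ (\textbf{[A4]}) and $\|\mathbf R\|\le C$ gives $\alpha_1\|\nabla\varphi\|^2\le\|\varphi'\|_{V'}\|\varphi\|_V+C\|\nabla\varphi\|$, whence $\sup_{t\ge\tau}\|\varphi\|_V\le C$; then $\|m\nabla\mu\|\le\|\Lambda\|_{L^\infty}\|\nabla\varphi\|+\|\mathbf R\|$ gives $\sup_{t\ge\tau}\|m\nabla\mu\|\le C$. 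This closes \eqref{eq.22}.

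For the logarithmic potential \eqref{eq09} I would obtain \eqref{eq3.010} from the standard monotonicity lemma for singular potentials: because the mean $\bar\varphi\equiv\bar\varphi_0$ stays in a compact subinterval of $(-1,1)$ and $F'$ is increasing with $F'(r)\to\pm\infty$ as $r\to\pm1$, one has $\|F'(\varphi)\|_{L^1(\Omega)}\le c_0\int_\Omega F'(\varphi)(\varphi-\bar\varphi)\,dx+c_1$. Substituting $F'(\varphi)=\mu-a\varphi+J*\varphi$ from \eqref{eq2} reduces the right-hand side to $\int_\Omega(\mu-\bar\mu)(\varphi-\bar\varphi)\,dx$ plus terms controlled by $\|\varphi\|^2$, and I would bound this pairing using the weak form of \eqref{eq1}, the bounds already established in \eqref{eq.22}, and the log-specific fact that $m(\varphi)F'(\varphi)=(1-\varphi^2)\log\frac{1+\varphi}{1-\varphi}$ is bounded on $(-1,1)$. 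I expect the main obstacle, here and throughout, to be precisely the degeneracy of $m$: it is responsible for the spurious $F'''$ (neutralised only because $mF''$ is bounded), it limits the available spatial regularity of $\varphi$ in the trilinear estimates, and — most seriously — it yields control only of the weighted flux $m\nabla\mu$ and never of $\nabla\mu$ itself, so that the $L^1$-bound on $F'(\varphi)$ cannot be closed by the usual gradient argument and genuinely requires the special algebra of the logarithmic potential.
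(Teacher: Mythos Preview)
Your treatment of \eqref{eq.22} is essentially the paper's own argument, carried out in more detail: differentiate \eqref{eq1} in time, test with $\mathcal N\varphi'$, use the lower bound on $\Lambda=m(a+F'')$ for the damping, and invoke the uniform Gronwall lemma fed by \eqref{eq2.7}. The paper is terser (it simply writes ``apply H\"older's and Young's inequalities'' and records the resulting differential inequality $\tfrac{d}{dt}\|\varphi'\|_{V'}^2+\tfrac{\alpha_0}{2}\|\varphi'\|^2\le C\|\varphi'\|_{V'}^2$), and then derives the remaining three bounds in a slightly different order, but the substance is the same.

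For \eqref{eq3.010}, however, your plan does not close, and the paper proceeds by a completely different route. Your monotonicity argument reduces $\|F'(\varphi)\|_{L^1}$ to a bound on $\int_\Omega(\mu-\bar\mu)(\varphi-\bar\varphi)\,dx$, but with degenerate mobility you only ever control $m(\varphi)\nabla\mu$, never $\nabla\mu$ or $\mu-\bar\mu$ itself; writing $\varphi-\bar\varphi=-\Delta\mathcal N(\varphi-\bar\varphi)$ and integrating by parts produces $\int_\Omega\nabla\mu\cdot\nabla\mathcal N(\varphi-\bar\varphi)$, and the missing factor $1/m(\varphi)$ is unbounded. The ``log-specific'' fact that $m(\varphi)F'(\varphi)\in L^\infty$ does not help here, since the pairing you need is $\int F'(\varphi)(\varphi-\bar\varphi)$, not $\int m(\varphi)F'(\varphi)\,g$. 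The weak form of \eqref{eq1} gives you nothing new either: testing with any function of $\varphi$ reproduces the same weighted flux. So the standard Miranville--Zelik route is genuinely blocked by the degeneracy.

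What the paper does instead is to bypass $\mu$ altogether. It splits $F'(\varphi)=\log\frac{1+\varphi}{2}-\log\frac{1-\varphi}{2}$ and, for each piece, tests \eqref{eq1} with $\mp\,1/(1\pm\varphi)$. The point is that the degenerate mobility $m=1-\varphi^2$ now \emph{cancels} the singularity: $m(\varphi)/(1+\varphi)^2=(1-\varphi)/(1+\varphi)$ is bounded, and the principal term becomes $\int_\Omega(ma+\lambda)\bigl|\nabla\log(1+\varphi)\bigr|^2\ge \alpha_1\int_\Omega|\nabla\log\tfrac{1+\varphi}{2}|^2$. This yields
\[
\frac{d}{dt}\int_\Omega\Bigl|\log\tfrac{1+\varphi}{2}\Bigr|\,dx+\frac{\alpha_1}{2}\int_\Omega\Bigl|\nabla\log\tfrac{1+\varphi}{2}\Bigr|^2dx\le C.
\]
To convert the gradient term into a damping for the $L^1$ norm, the paper applies a generalised Poincar\'e inequality on the set $\Omega_{1,t}=\{\varphi\ge -(1-\bar\varphi)/2\}$, whose measure is bounded below by $\tfrac{1+\bar\varphi}{4}|\Omega|$ uniformly in $t$; this gives a Riccati-type inequality $g'+\beta^2 g^2\le C$ for $g(t)=\int_\Omega|\log\tfrac{1+\varphi}{2}|$, and comparison with the explicit ODE solution (plus an approximation of the initial datum and Fatou) gives the uniform bound. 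This is the missing idea: the test function $1/(1\pm\varphi)$, tailored to the specific pair $(F,m)$, is what makes the degeneracy work \emph{for} you rather than against you.
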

\begin{proof}
 Differentiate \eqref{eq1} w.r.t 't' formally and test with $\mathcal{N}(\varphi')$. Apply H\"older's and Young's inequalities to estimate each of its terms, then we arrive at, 
\begin{align}\label{eq2.07}
    \frac{d}{dt}\|\varphi'\|_{V'}^2 + \frac{\alpha_0}{2}\|\varphi'\|^2 \leq C\|\varphi'\|_{V'}^2.
\end{align}
Using local integrability of $\|\varphi'\|_{V'}$ from \eqref{eq2.7}, we can apply uniform Gronwall lemma \cite[Lemma 1.1, Chapter \Romannum{3}]{TEM}. This yields,
\begin{align}\label{eq2.8}
    \sup_{t\geq \tau}\|\varphi'(t)\|_{V'} \leq C.
\end{align}
Consequently  from \eqref{eq1}, we have,
\begin{align}\label{eq2.9}
    \sup_{t\geq \tau}\|m(\varphi)\nabla\mu\| \leq C.
\end{align}
Further, integrating \eqref{eq2.07} over $[t,t+1]$ and applying the estimate \eqref{eq2.8}, we obtain,
\begin{align}\label{eq2.10}
    \|\varphi'\|_{L^2(t,t+1; H)} &\leq C, \forall t \geq \tau.
\end{align}
Consider the following inner-product. 
\begin{align*}
   (m(\varphi)\nabla\mu, \nabla\varphi) &=((ma+\lambda)\nabla\varphi, \nabla\varphi) + (m(\varphi\nabla a-\nabla J*\varphi), \nabla\varphi),\\
   \frac{\alpha_1}{2}\|\nabla\varphi\|^2 &\leq C+ \|m(\varphi)\nabla\mu\|^2 .
\end{align*}
Therefore using the estimate \eqref{eq2.9}, it follows that
\begin{align}\label{eq2.11}
    \sup_{t\geq \tau}\|\varphi\|_{V} \leq C.
\end{align}
Next we will derive a global bound for $\lVert F'(\varphi)\rVert_{L^1(\Omega)}$, uniform in time. In our problem, $F'(s) = \log(\frac{1+s}{1-s})$. Consider,

\begin{align*}
    F'(\varphi)& = \log\big( \frac{1+\varphi}{2}\frac{2}{1-\varphi}\big) = \log (\frac{1+\varphi}{2}) -\log (\frac{1-\varphi}{2}),\\
    F''(\varphi)&= \frac{1}{(1-\varphi^2)}.
\end{align*}

We will prove that $\lVert \log (\frac{1+\varphi}{2})\rVert_{L^1(\Omega)} \leq C$, where $C$ is a constant independent of time. Analogously, we can prove $\lVert \log (\frac{1-\varphi}{2})\rVert_{L^1(\Omega)} \leq C$. Unlike previously derived estimates, the following estimate holds for the particular potential \eqref{eq09}. This estimate will be used in the next section to prove the separation property. Since $\nabla\cdot\textbf{u} = 0 $, the following proof is same as the proof of  \cite[Lemma A.2]{GGF} for the Cahn-Hilliard equation.
We used Young's and H\"older's inequalities in following calculations.
\begin{align*}
    \frac{d}{dt}\int_\Omega |\log(\frac{1+\varphi}{2})| &= -\frac{d}{dt}\int_\Omega \log(\frac{1+\varphi}{2}) = -\frac{d}{dt}\int_\Omega \log(1+\varphi) \\
    &= -\int_\Omega\frac{1}{(1+\varphi)}\varphi' = -\int_\Omega\frac{1}{(1+\varphi)}\nabla\cdot(m(\varphi)\nabla\mu)\\
    &=-\int_\Omega(ma+\lambda)\frac{|\nabla\varphi|^2}{(1+\varphi)^2} - \int_\Omega\frac{\nabla\varphi }{(1+\varphi)^2} (1-\varphi^2)(\nabla a \varphi -\nabla J*\varphi)\\
    &= -\int_\Omega(ma+\lambda)\big(\frac{|\nabla\varphi|}{1+\varphi}\big)^2-\int_\Omega\frac{\nabla\varphi}{(1+\varphi)} (1-\varphi)(\nabla a \varphi -\nabla J*\varphi)\\
    &\leq -\alpha_1\int_\Omega\big(\frac{|\nabla\varphi|}{1+\varphi}\big)^2 + \frac{\alpha_1}{2}\int_\Omega\big(\frac{\nabla\varphi}{(1+\varphi)}\big)^2 + \frac{1}{2\alpha_1}\int_\Omega (1-\varphi)^2(\nabla a \varphi -\nabla J*\varphi)^2\\
    &\leq -\frac{\alpha_1}{2}\int_\Omega\big(\nabla|\log(\frac{1+\varphi}{2})|\big)^2 + \frac{1}{2\alpha_1}\int_\Omega (1-\varphi)^2(\nabla a \varphi -\nabla J*\varphi)^2.
\end{align*}
Therefore using the global estimates derived,
\begin{equation}\label{eq3.13}
    \frac{d}{dt}\int_\Omega |\log(\frac{1+\varphi}{2})| +\frac{\alpha_1}{2}\int_\Omega\big(\nabla|\log(\frac{1+\varphi}{2})|\big)^2 \leq C.
\end{equation}
Recall the generalised Poincar\'e  inequality for $ f \in W^{ 1, p} (\Omega)$  {\cite[Lemma 4.1.3]{ziemer1989weakly}}
\begin{align}\label{eq3.22}
\big\|f(\varphi)-\frac{1}{|\Omega_1|}\int_{\Omega_1} f(\varphi)\big\| &\leq \frac{C}{|\Omega_1|}\|\nabla f(\varphi))\|, 
\end{align}
where $\Omega$ is an open bounded domain with $C^1$ boundary and $\Omega_1\subset \Omega$ such that $|\Omega_1| > 0$. 
To estimate the second term in \eqref{eq3.13},  define a set $\Omega_{1,t}$ as 
\begin{equation*}
    \Omega_{1,t} = \{ x\in \Omega : \varphi(x,t)\geq  \frac{-(1-\bar{\varphi})}{2} \}.
\end{equation*}
We claim $|\Omega_{1,t}| \geq \frac{1+\bar{\varphi}}{4}|\Omega|$. Suppose to the contrary, $|\Omega_1| < \frac{1+\bar{\varphi}}{4}|\Omega|$, then we have,
\begin{align*}
    \frac{1+\bar{\varphi}}{2} &= \frac{1}{|\Omega|}\int_{\Omega_1} \frac{1+\varphi}{2} dx + \frac{1}{|\Omega|}\int_{\Omega-\Omega_1} \frac{1+\varphi}{2} dx\\ 
    &< \frac{|\Omega_1|}{|\Omega|}+  \frac{|\Omega-\Omega_1|}{|\Omega|} \frac{1+\bar{\varphi}}{4}\\
    &< \frac{1+\bar{\varphi}}{2}. \,\,\,\hspace{1cm} \
\end{align*}
which is  a contradiction.  Consider $\Omega_1 = \Omega_{1,t}$ in \eqref{eq3.22}. Applying H\"older's, generalised Poincar\'e and Young's inequalities yields, 
\begin{align*}
   \|\log(\frac{1+\varphi}{2})\|_{L^1(\Omega)}^2 &\leq |\Omega|\|\log(\frac{1+\varphi}{2})\|^2 \leq 2|\Omega|\big\|\log(\frac{1+\varphi}{2})-\frac{1}{|\Omega_1|}\int_{\Omega_1} \log(\frac{1+\varphi}{2})\big\|^2 + \frac{2|\Omega|^2}{|\Omega_1|^2}\Big(\int_{\Omega_1} \log(\frac{1+\varphi}{2})dx\Big)^2\\
   &\leq \frac{2C^2|\Omega|}{|\Omega_1|^2}\|\nabla \log(\frac{1+\varphi}{2})\|^2 + 2|\Omega|^2 \log(\frac{1+\bar{\varphi}}{4})^2\\
   &\leq \frac{8C^2}{|\Omega|}\|\nabla \log(\frac{1+\varphi}{2})\|^2 + 2|\Omega|^2 \log(\frac{1+\bar{\varphi}}{4})^2.
\end{align*}
Substituting back in \eqref{eq3.13}, we deduce,
\begin{align}\label{eq3.17}
    \frac{d}{dt} \int_\Omega |\log(\frac{1+\varphi}{2})|dx + \frac{\alpha_1|\Omega|}{16C^2}\big( \int_\Omega |\log(\frac{1+\varphi}{2})|dx\big)^2 \leq C+ \frac{\alpha_1|\Omega|^3}{8C^2} \log(\frac{1+\bar{\varphi_0}}{4})^2 \leq C,
\end{align}
where the constant $C$ depends only on $\varphi_0$. We adopt the techniques used in \cite{LSP}, \cite{GGF}, to obtain a uniform bound for $\int_\Omega |\log(\frac{1+\varphi}{2})|dx$. Although $\varphi_0\in L^\infty(\Omega)$,  satisfies $F(\varphi_0) \in L^1(\Omega)$, note that $\log(\frac{1+\varphi_0}{2})$ need not be in $L^1(\Omega)$. Therefore, we consider a sequence $\varphi_{0,n} \in L^2(\Omega)$ such that $\varphi_{0,n} \rightarrow \varphi_0$ in $L^2(\Omega)$ and $\log(\frac{1+\varphi_{0,n}}{2}) \in L^1(\Omega)$. 
The condition $\|\varphi_0\|_{L^\infty(\Omega)}<1$ assures  an existence of a sequence $\varphi_{0,n}$. 
    Then the solution to \eqref{eq1}-\eqref{eq07} corresponding to the initial data $\varphi_{0,n}$ say, $(\varphi_n, \textbf{u}_n)$ satisfies \eqref{eq3.17}. 

 Note that for an initial value problem 
\begin{align*}
    g'(t) +\beta^2 g(t)^2 = c^2, \hspace{1cm} g(0)= g_0,
\end{align*}
the solution $g(t)$ satisfies following inequality, 
\begin{align}\label{eq3*}
    g(t)\leq \frac{c}{\beta}\Bigg(\frac{e^{2c\beta t }|\frac{c+\beta g_0}{c-\beta g_0}|- 1}{e^{2c\beta t}|\frac{c+\beta g_0}{c-\beta g_0}|+ 1 }\Bigg).
\end{align}

Comparing \eqref{eq3.17} and \eqref{eq3*}, $\int_\Omega \log(\frac{1+\varphi_n}{2})dx$ is dominated by a constant given as in \eqref{eq3*} and corresponding to initial data,
\begin{align*}
   g(0)= \int_\Omega |\log(\frac{1+\varphi_{0,n}}{2})|dx
\end{align*}
and $\beta^2 = \frac{\alpha_1|\Omega|}{16C^2}$.
\\Now using continuous dependence of solution we get $\varphi_n \rightarrow \varphi$, where $\varphi$ is the solution corresponding to the initial data $\varphi_0$. Using Fatou's lemma,
\begin{align}
    \int_\Omega |\log(\frac{1+\varphi}{2})|dx \leq \lim\inf \int_\Omega |\log(\frac{1+\varphi_n}{2})|dx \leq C
\end{align}
Analogously, we can prove that  \begin{align}
    \int_\Omega |\log(\frac{1-\varphi}{2})|dx \leq C.
\end{align}
This estimate is vital in proving the separation property of the system \eqref{eq1}-\eqref{eq07} which is the main result of the next section. 
\end{proof}

\section{Separation property}
The separation property for the Cahn-Hilliard equation is established using iterative arguments of Alikakos Moser  in \cite{GGF}. However, we have difficulty in implementing this method to our problem due to a non-local term and the degenerate mobility. Particularly we will not be able to apply the Poincar\'e inequality to estimate $\lVert \sqrt{m(\varphi)}\nabla\mu\rVert$ since the mobility term is degenerate and varies with $\varphi$.
We will adopt De-Giorgi's method introduced in a recent paper \cite{GGA}. This method has been used to prove the separation property for the non-local Cahn-Hilliard equation with a constant mobility and a singular type potential in dimension 2. The advantage of   this method is that it does not require any condition on the third derivative of the potential to prove the separation property. Moreover, this method allows us to explicitly calculate the separation scales indicating how far it is from the pure phases. We impose only the blow-up of the second derivative of the potential near the pure phases $\pm 1$, which is a natural condition for the potential like \eqref{eq09}. Our proof uses a general framework, eg, for a potential $F$ and mobility $m$ satisfying $\textbf{[A1]}-\textbf{[A4]}$. However, we need to use the specific information of the potential in one step towards the end of the proof.

We state our main result, which is the separation property for the nonlocal Cahn-Hilliard-Brinkman system with a singular-type potential \eqref{eq09} and degenerate mobility \eqref{eq1.0} in dimension 2.

\begin{theorem}
Let $\varphi_0\in L^\infty(\Omega)$ be such that $\lVert\varphi_0\rVert_{L^\infty} < 1$ and $|\Bar{\varphi_0}|<1$. Then for any $\tau>0$, $\exists \; \delta\in (0,1)$ which depends on $\tau$ and $\varphi_0$ such that the unique solution to \eqref{eq1}-\eqref{eq07} with potential, \eqref{eq09} and mobility, \eqref{eq1.0} satisfies
\begin{align}\label{eq3.01}
\sup_{t\geq \tau} \lVert\varphi(t)\rVert_{L^\infty(\Omega)} \leq 1-\delta.
\end{align}
\end{theorem}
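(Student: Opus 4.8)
The plan is to run a De~Giorgi level-set iteration localised near the pure phase $+1$; the estimate near $-1$ then follows from the evenness of \eqref{eq09}--\eqref{eq1.0} together with the companion bound for $\log(\tfrac{1-\varphi}{2})$ already established in the previous section. Fix an arbitrary $t_\ast\ge\tau$; I will produce a level $K=1-\delta<1$, \emph{independent of} $t_\ast$, such that $\varphi(t_\ast)\le K$ a.e. Introduce the increasing level sequence $k_n=K-\delta\,2^{-n}\uparrow K$ and the increasing time sequence $\sigma_n=t_\ast-r\,2^{-n}\uparrow t_\ast$, where $r<\tau/2$ is fixed so that $\sigma_0=t_\ast-r\ge\tau/2$, and all uniform bounds \eqref{eq.21}, \eqref{eq.22}, \eqref{eq3.010} are invoked with parameter $\tau/2$. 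The De~Giorgi energy is
\begin{equation*}
y_n=\sup_{t\in[\sigma_n,t_\ast]}\big\|(\varphi-k_n)_+\big\|^2+\int_{\sigma_n}^{t_\ast}\big\|\nabla(\varphi-k_n)_+\big\|^2\,dt,
\qquad A_n(t)=\{x\in\Omega:\varphi(x,t)>k_n\}.
\end{equation*}

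The first step is the energy inequality at each level. Testing \eqref{eq1} with $(\varphi-k_n)_+$, the transport term vanishes because $\nabla\!\cdot\mathbf{u}=0$: write it as $\int_\Omega\mathbf{u}\cdot\nabla\Phi(\varphi)$ with $\Phi'=(\,\cdot-k_n)_+$ and integrate by parts. For the flux term I split $m(\varphi)\nabla\mu=m(\varphi)(a+F''(\varphi))\nabla\varphi+m(\varphi)(\varphi\nabla a-\nabla J\ast\varphi)$; on $A_n$ the leading contribution is $\int_{A_n}m(\varphi)(a+F''(\varphi))|\nabla\varphi|^2$, which by \textbf{[A4]} is bounded below by $\alpha_1\|\nabla(\varphi-k_n)_+\|^2$. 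This is precisely where the degeneracy of $m$ is neutralised: although $m(\pm1)=0$, the product $m(a+F'')$ remains coercive. The remaining nonlocal term is controlled by \textbf{[J]} and Young's inequality and absorbed, leaving
\begin{equation*}
\tfrac12\tfrac{d}{dt}\big\|(\varphi-k_n)_+\big\|^2+\tfrac{\alpha_1}{2}\big\|\nabla(\varphi-k_n)_+\big\|^2\le C\,|A_n(t)|.
\end{equation*}

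Next I build the recursion. Multiplying by a cut-off $\zeta_n$ with $\zeta_n\equiv0$ for $t\le\sigma_{n-1}$, $\zeta_n\equiv1$ for $t\ge\sigma_n$ and $|\zeta_n'|\le C2^n/r$, integrating and taking the supremum gives $y_n\le C\big(\tfrac{2^n}{r}\!\int_{\sigma_{n-1}}^{t_\ast}\!\|(\varphi-k_n)_+\|^2+\int_{\sigma_{n-1}}^{t_\ast}\!|A_n|\big)$. Two ingredients close the recursion in $y_{n-1}$: the two-dimensional Ladyzhenskaya inequality $\|v\|_{L^4}^2\le C\|v\|\,\|v\|_V$, which yields $\int_{\sigma_{n-1}}^{t_\ast}\|(\varphi-k_{n-1})_+\|_{L^4}^4\le C\,y_{n-1}^2$; and the inclusion $(\varphi-k_{n-1})_+\ge k_n-k_{n-1}=\delta2^{-n}$ on $A_n$, giving through Chebyshev $|A_n(t)|\le(\delta2^{-n})^{-4}\!\int(\varphi-k_{n-1})_+^4$. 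Combining via H\"older in space-time produces
\begin{equation*}
y_n\le C\,b^{\,n}\,y_{n-1}^{\,2},\qquad b>1,\ \ C=C(\alpha_1,r,\delta,\Omega,J).
\end{equation*}
By the standard De~Giorgi convergence lemma, $y_n\to0$ — hence $\|(\varphi(t_\ast)-K)_+\|=0$, i.e. $\varphi(t_\ast)\le K$ — provided the base energy satisfies $y_0\le C^{-1}b^{-1}$.

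The crux, and the single place where the specific potential enters, is securing this smallness of $y_0$ uniformly in $t_\ast$. Since $F'$ is increasing, $|A_0(t)|=|\{F'(\varphi)>F'(k_0)\}|\le\|F'(\varphi)\|_{L^1(\Omega)}/F'(k_0)$, and the uniform-in-time bound \eqref{eq3.010} forces $\sup_{t\ge\tau/2}|A_0(t)|\le C/F'(k_0)\to0$ as $k_0=K-\delta\to1$. Integrating the energy inequality at level $n=0$ from $\sigma_0$ and using $\|(\varphi-k_0)_+\|^2\le(1-k_0)^2|A_0|$ gives $y_0\le C\big((1-k_0)^2+r\big)/F'(k_0)$, which drops below the De~Giorgi threshold once $K$ (equivalently $\delta$) is taken close enough to $1$; the resulting $K$ depends only on $\tau$ and $\varphi_0$ through the uniform constants. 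I expect this to be the main obstacle, since it is exactly here that the degenerate mobility would otherwise block a direct $L^p$ estimate of $F'(\varphi)$, and this is why the scheme is confined to potentials admitting a bound such as \eqref{eq3.010}. Repeating the construction near $-1$ and combining the two one-sided bounds yields $\sup_{t\ge\tau}\|\varphi(t)\|_{L^\infty(\Omega)}\le K=1-\delta$, which is \eqref{eq3.01}.
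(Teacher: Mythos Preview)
Your proposal follows the same De~Giorgi iteration strategy as the paper, but with a leaner implementation. The paper takes $y_n$ to be the space--time measure $\int_{I_n}|A_n(s)|\,ds$ of the super-level sets and employs \emph{two} test functions, $\varphi_n\eta_n^2$ and $M'(\varphi_n)\eta_n^2$ (with $mM''=1$); the second one brings in the blow-up of $F''(1-2\delta)$, which is then exploited through the auxiliary condition \eqref{eq3.08} to absorb the $M'''$-terms it itself generates and to balance the recursion constants. You instead take $y_n$ to be the localised parabolic energy and test only with $(\varphi-k_n)_+$, relying solely on \textbf{[A4]} for coercivity; this bypasses the $M$-calculus entirely and never invokes the $F''$ blow-up. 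The paper's additional machinery yields a slightly different recursion exponent ($7/4$ versus your quadratic one) but does not alter the essential mechanism. Both routes land on the same bottleneck: the De~Giorgi smallness of $y_0$ has to come from the uniform $L^1$ bound \eqref{eq3.010} on $F'(\varphi)$ via Chebyshev, precisely as you identify, and this is the single step where the specific logarithmic form of the potential is used. Your variant is closer to the classical parabolic De~Giorgi scheme and is, if anything, more transparent; what the paper's approach buys is an explicit accounting of the separation scale through the constraints \eqref{eq3.08} and \eqref{d4}.
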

Before proving the theorem, we will state a lemma proved in \cite[Lemma 4.3]{GGA} and \cite[Lemma 3.8]{POI} using the induction argument. 

\begin{lemma}\label{lemma2}
   Let $\{y_n\}_{n\in \mathbb{N}\cup\{0\}}\subset \mathbb{R}^+$ satisfies,
   $$y_{n+1} \leq Cb^{n}y_{n}^{1+\epsilon},$$
   for some $C>0$, $b>1$ and $\epsilon>0$. Assume that $y_0 \leq C^{-1/\epsilon}b^{-1/\epsilon^2}$. Then,
   $$y_n \leq y_0b^{-n/\epsilon}, \hspace{1cm} n\geq 1.$$ 
   In particular, $y_n \rightarrow 0$ as $n \rightarrow \infty$.
\end{lemma}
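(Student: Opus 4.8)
\textbf{Plan for the proof of Lemma \ref{lemma2}.}
The statement is a purely algebraic assertion about a real sequence, so I would prove it by induction on $n$, completely independently of the PDE context. The claim is that if $y_{n+1}\le Cb^{n}y_n^{1+\epsilon}$ and the smallness hypothesis $y_0\le C^{-1/\epsilon}b^{-1/\epsilon^2}$ holds, then $y_n\le y_0 b^{-n/\epsilon}$ for all $n\ge 1$. The natural strategy is to show that the stronger, explicit bound $y_n\le y_0 b^{-n/\epsilon}$ propagates through the recursion; the whole point of the smallness condition on $y_0$ is precisely to make the induction close.

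\textbf{Base case and inductive step.}
First I would record the trivial identity $y_0\le y_0 b^{0}$, and then verify the step $n=0\to 1$ as the genuine base case, since this is where the hypothesis on $y_0$ is consumed. Assuming inductively that $y_k\le y_0 b^{-k/\epsilon}$ holds for some $k\ge 0$, I would substitute this into the recursion:
\begin{align*}
y_{k+1}\le Cb^{k}y_k^{1+\epsilon}\le Cb^{k}\big(y_0 b^{-k/\epsilon}\big)^{1+\epsilon}
= C\,y_0^{1+\epsilon}\,b^{k}\,b^{-k(1+\epsilon)/\epsilon}.
\end{align*}
The exponent of $b$ collapses as $k-k(1+\epsilon)/\epsilon = k\big(1-(1+\epsilon)/\epsilon\big)=-k/\epsilon$, so
\begin{align*}
y_{k+1}\le C\,y_0^{1+\epsilon}\,b^{-k/\epsilon}
= \big(C\,y_0^{\epsilon}\,b^{1/\epsilon}\big)\,y_0\,b^{-(k+1)/\epsilon}.
\end{align*}
Thus the desired bound $y_{k+1}\le y_0 b^{-(k+1)/\epsilon}$ follows exactly when the bracketed prefactor satisfies $C\,y_0^{\epsilon}\,b^{1/\epsilon}\le 1$, which is equivalent to $y_0\le C^{-1/\epsilon}b^{-1/\epsilon^2}$ — precisely the stated smallness hypothesis. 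This closes the induction and simultaneously shows the step is tight.

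\textbf{Remaining point and main obstacle.}
The only genuinely delicate bookkeeping is that the bracketed factor $C y_0^{\epsilon}b^{1/\epsilon}$ does not depend on $k$, so the very same smallness condition used in the base case suffices at every stage; I would emphasize that the prefactor is $\le 1$ uniformly in $k$, which is what allows the explicit geometric decay to persist rather than deteriorate. Since $b>1$ and $\epsilon>0$, we have $b^{-n/\epsilon}\to 0$, giving the final conclusion $y_n\to 0$. There is no real analytic obstacle here — the ``hard part'' is merely verifying that the exponent arithmetic collapses correctly and that the hypothesis on $y_0$ is exactly the threshold that keeps the induction self-sustaining, so I would present the exponent computation explicitly to make the role of the smallness assumption transparent.
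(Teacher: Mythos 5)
Your induction is correct: the exponent arithmetic $k - k(1+\epsilon)/\epsilon = -k/\epsilon$ is right, and the prefactor condition $C y_0^{\epsilon} b^{1/\epsilon} \leq 1$ is exactly equivalent to the stated smallness hypothesis $y_0 \leq C^{-1/\epsilon} b^{-1/\epsilon^2}$, so the induction closes uniformly in $k$. The paper does not reproduce a proof but cites the lemma as established ``using the induction argument'' in the referenced works, and your argument is precisely that standard induction, so you take essentially the same approach.
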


\begin{proof}[Proof of Theorem 4.1]

The main idea of the proof is to consider a finite time interval, $[0,T]$ and a $\tau>0$ and then we need to prove that there exists a $\delta>0$ such that if $t\in [T-\tau, T]$ then measure of the set $\{ x\in \Omega : \varphi(x,t) \geq 1-\delta\}$ is zero. Since $T$ is arbitrary we will be able to extend the result for all $ T \geq \tau$  and hence \eqref{eq3.01} will follow. For, we define two sequences, one in time, say $t_n$ that increases to $T-\tau$. And another sequence in $\mathbb{R}$, say $k_n$ which increases to $1-\delta$. We will prove the integral, $$\int\limits_{t_n}^{T}\int_{\{x\in \Omega: \varphi(x,t)\geq k_n\}} 1 dxdt,$$ goes to zero as $n$ tends to infinity using lemma 3.2. Hence the separation property follows. 

First we introduce the mathematical setup. Let $T>0$, $\epsilon_0>0$ from \textbf{[A3]} be fixed. We choose $\widetilde{\tau}$ and $\delta$ such that $T>3\widetilde{\tau}$ and $ 0<\delta<\frac{\epsilon_0}{2}$. 
 We define two sequences, set $t_{-1} = T-3\widetilde{\tau},$ and define 
\begin{align*}
    t_n &= t_{n-1} + \frac{\widetilde{\tau}}{2^n}, \,\,  n\geq 0. \\
k_n &= 1-\delta - \frac{\delta}{2^n}, \,\, n\geq 0.
\end{align*}
\\Observe that $t_n$ is an increasing sequence with bounds $t_{-1}< t_n < t_{n+1}< T-\widetilde{\tau}, \,\, \forall n\geq 0$ and $t_n$ converges to $t_{-1}+ 2\widetilde{\tau} = T-\widetilde{\tau} $ as $n\rightarrow \infty$. Similarly $k_n$ is an increasing sequence with bounds $1-2\delta < k_n < k_{n+1} < 1-\delta, \,\, \forall n\geq 1$ and $k_n$ converges to $1-\delta$ as $n\rightarrow \infty$.
For $n\geq 0$, define
$$\varphi_n(x,t) = \text{max}\{ \varphi(x,t)-k_n, 0\} = (\varphi - k_n)_+.$$
Further, $I_n := [t_{n-1}, T]$ and $$A_n(t) = \{ x\in \Omega | \varphi(x,t)- k_n \geq 0\}, \forall t\in I_n.$$
And for $t\in [0,t_{n-1})$, we define, $A_n(t):= \phi$. Clearly, $I_n \supseteq I_{n+1}, \forall n\geq 1$. Then $I_n \rightarrow [T-\widetilde{\tau}, T]$ as $n \rightarrow \infty$. In addition, $A_{n+1}(t) \subseteq A_n(t), \forall n\geq 1, t\in I_{n+1}$.
We set,
$$y_n = \int_{I_n}\int_{A_n(s)} 1 dxds, \,\, \forall n \geq 0. $$
Our aim is to prove $y_n \rightarrow 0$ as $n\rightarrow \infty$. 
 For $n\geq 0$, consider cut-off functions, $\eta_n \in C^1(\mathbb{R})$ such that $|\eta_n(t)|\leq 1$, $|\eta_n'(t)|\leq 2\frac{2^n}{\widetilde{\tau}}, \forall t $ and
\begin{align*}
\eta_n(t) &= \begin{cases}
               1, \text{ for } t\geq t_n,\\
               0, \text{ for } t\leq t_{n-1}.\\
            \end{cases}
\end{align*}
We will choose a test function $v=\varphi_n\eta_n^2$ and test it with \eqref{eq1} and integrate it over the interval $[0,t]$, for $t_n \leq t \leq T $. Note that $\eta_n = 0$ for $t\leq t_{n-1}$. 
\begin{align}\label{eq3.1}
    \int\limits_{t_{n-1}}^t \big(\langle \varphi',\varphi_n\eta_n^2\rangle + (\textbf{u}\cdot\nabla\varphi, \varphi_n\eta_n^2) \big) &+ \int\limits_{t_{n-1}}^t (m(\varphi)(F''(\varphi) + a )\nabla\varphi, \nabla\varphi_n)\eta_n^2  \nonumber\\&+  \int\limits_{t_{n-1}}^t ( m(\varphi)((\nabla a)\varphi - \nabla J*\varphi), \nabla\varphi_n)\eta_n^2 =0.
\end{align}
Now we estimate terms in \eqref{eq3.1}. Note that $\varphi_n(x) = 0, \forall x \not\in A_n $ and on $A_n$, $\varphi_n' = \varphi'$. Therefore we have,
\begin{align*}
    \int\limits_{t_{n-1}}^t \langle \varphi',\varphi_n\eta_n^2\rangle &= \frac{1}{2} \int\limits_{t_{n-1}}^t \frac{d}{dt}\lVert\varphi_n\eta_n\rVert^2 -  \int\limits_{t_{n-1}}^t \lVert\varphi_n(s)\rVert^2\eta_n\eta_n' ds\\
    &= \frac{1}{2}\lVert\varphi_n(t)\rVert^2 - \int\limits_{t_{n-1}}^{t}\lVert\varphi_n\rVert^2\eta_n\eta_n' ds. 
    \end{align*}
Since $t\geq t_n, \eta_n(t) = 1 $ and  $\eta_n(t_{n-1}) =0.$ The second term can be further estimated as follows.  Here, we have used, $ |\varphi_n| \leq 2\delta, \forall n$.
    \begin{align*}
    \int\limits_{t_{n-1}}^{t}\lVert\varphi_n\rVert^2\eta_n(s)\eta_n'(s) ds &\leq 2\frac{2^n}{\widetilde{\tau}}\int\limits_{t_{n-1}}^{t}\int_{A_n}\varphi_n^2\eta_n(s) dxds  \leq 2\frac{2^n}{\widetilde{\tau}}(2\delta)^2\int\limits_{t_{n-1}}^{t}\int_{A_n} 1 dxds \leq 2\frac{2^n(2\delta)^2}{\widetilde{\tau}} y_n,
    \\\int\limits_{t_{n-1}}^t (\textbf{u}\cdot\nabla\varphi, \varphi_n\eta_n^2) &= \frac{1}{2}\int\limits_{t_{n-1}}^t\int_{A_n}
     \textbf{u}\cdot\nabla(\varphi_n^2)\eta_n^2 = 0, \hspace{1cm}(\text{ Since } \nabla(\eta_n(t))= \textbf{0}, \nabla\cdot\textbf{u}=0),
\end{align*}
\vspace{-.5cm}
\begin{align*}
     \int\limits_{t_{n-1}}^t( m(\varphi)(F''(\varphi) + a )\nabla\varphi, \nabla\varphi_n)\eta_n^2 &= \int\limits_{t_{n-1}}^t\int_{A_n} m(\varphi)(F''(\varphi) + a )|\nabla\varphi_n|^2\eta_n^2\\
     &\geq \alpha_1\int\limits_{t_{n-1}}^t\lVert\nabla\varphi_n\rVert^2\eta_n^2 , \hspace{1cm}(\text{ by assumption \textbf{[A4]} }).
\end{align*}
Now by using Young's inequality and for a  $\delta >0$  small, which will choose appropriately later on such that $F''(1-2\delta) \neq 0$,
\begin{align*}
     \int\limits_{t_{n-1}}^t \big( m(\varphi)((\nabla a)\varphi &- \nabla J*\varphi),\nabla\varphi_n\big)\eta_n^2 =  \int\limits_{t_{n-1}}^t \big(m(\varphi)((\nabla a)\varphi - \nabla J*\varphi)\eta_n, \nabla\varphi_n\eta_n\big)\\
     &\leq \frac{F''(1-2\delta)}{4}\int\limits_{t_{n-1}}^{t}\lVert\nabla\varphi_n\rVert^2\eta_n^2 + \frac{1}{F''(1-2\delta)} \int\limits_{t_{n-1}}^t \int_{A_n} |m(\varphi)((\nabla a)\varphi - \nabla J*\varphi)|^2\eta_n^2\\
     &\leq \frac{F''(1-2\delta)}{4}\int\limits_{t_{n-1}}^{t}\lVert\nabla\varphi_n\rVert^2\eta_n^2 + \frac{2\|m\|_{L^\infty}^2(\lVert\nabla a\rVert_{L^\infty(\Omega)}^2 + \lVert\nabla J\rVert_{L^1(\Omega)}^2)}{F''(1-2\delta)} \int\limits_{t_{n-1}}^t\int_{A_n}1 dxds\\
     &\leq \frac{F''(1-2\delta)}{4}\int\limits_{t_{n-1}}^{t}\lVert\nabla\varphi_n\rVert^2\eta_n^2 + \frac{2\|m\|_{L^\infty}^2(\lVert\nabla a\rVert_{L^\infty}^2 + \lVert\nabla J\rVert_{L^1}^2)}{F''(1-2\delta)} y_n.
\end{align*}
Combining all above estimates in \eqref{eq3.1} results in following inequality.
\begin{align}\label{eq3.2}
  \frac{1}{2}\lVert\varphi_n(t)\rVert^2 + \alpha_1\int\limits_{t_{n-1}}^t\lVert\nabla\varphi_n\rVert^2\eta_n^2 \leq  \frac{F''(1-2\delta)}{4}\int\limits_{t_{n-1}}^{t}\lVert\nabla\varphi_n\rVert^2\eta_n^2 + \Big( \frac{2^{n+3}\delta^2}{\widetilde{\tau}} + \frac{2\|m\|_{L^\infty}^2(\lVert\nabla a\rVert_{L^\infty}^2 + \lVert\nabla J\rVert_{L^1}^2)}{F''(1-2\delta)}\Big) y_n.
\end{align}
\\Consider a function $M \in C^3(-1,1)$ such that $m(s)M''(s) =1 $ and $M(0) = M'(0) = 0$. Note that if $x\not\in A_n$, then $ \varphi_n(x)$ is zero and hence $M(\varphi_n)=0$.
Consider the test function $v = M'(\varphi_n)\eta_n^2$, test with \eqref{eq1} and integrate it over $[0,t]$, for $t$ such that $t_{n}\leq t \leq T$.
\begin{align}\label{eq3.3}
    \int\limits_{t_{n-1}}^t \langle \varphi', M'(\varphi_n)\rangle\eta_n^2 +  \int\limits_{t_{n-1}}^t(\textbf{u}\cdot\nabla\varphi,  M'(\varphi_n))\eta_n^2  &+ \int\limits_{t_{n-1}}^t \big(m(\varphi)(F''(\varphi) + a )\nabla\varphi,M''(\varphi_n)(\nabla\varphi_n)\big)\eta_n^2 \nonumber\\  &+  \int\limits_{t_{n-1}}^t \big(m(\varphi)((\nabla a)\varphi - \nabla J*\varphi), M''(\varphi_n)(\nabla\varphi_n)\big)\eta_n^2 = 0.
\end{align}
On the set $A_n$, we have $0 \leq \varphi_n \leq 2\delta$ and since $M \in C^3(-1,1)$ we have $M$ bounded on $[0,2\delta]$.
\begin{align*}
   \int\limits_{t_{n-1}}^t \langle \varphi', M'(\varphi_n)\eta_n^2\rangle = \int\limits_{t_{n-1}}^t \int_{A_n}(\frac{d}{dt} M(\varphi_n))\eta_n^2 &=  \int\limits_{t_{n-1}}^{t}\frac{d}{dt}\Big(\int_{A_n} M(\varphi_n(t))\eta_n^2\Big) - 2\int\limits_{t_{n-1}}^{t}\int_{A_n}M(\varphi_n)\eta_n(s)\eta_n'(s) dxds
   \\&= \int_{A_n} M(\varphi_n(t))dx - 2\int\limits_{t_{n-1}}^{t}\int_{A_n}M(\varphi_n)\eta_n(s)\eta_n'(s) dxds,\\
   \int\limits_{t_{n-1}}^{t}\int_{A_n}M(\varphi_n)\eta_n(s)\eta_n'(s) dxds &\leq 2\frac{2^n}{\widetilde{\tau}}\int\limits_{t_{n-1}}^{t}\int_{A_n}M(\varphi_n)\eta_n(s) dxds  \\ &\leq 2\frac{2^n}{\widetilde{\tau}}\lVert M(\varphi_n)\rVert_{L^\infty}\int\limits_{t_{n-1}}^{t}\int_{A_n}1 dxds \leq \frac{2^{n+1}}{\widetilde{\tau}}\lVert M(\varphi_n)\rVert_{L^\infty} y_n,\\
   \int\limits_{t_{n-1}}^t(\textbf{u}\cdot\nabla\varphi,  M'(\varphi_n)\eta_n^2) &= \int\limits_{t_{n-1}}^t\int_{A_n} \textbf{u}\cdot\nabla(M(\varphi_n)) \eta_n^2 = 0, (\text{ Since } \nabla(\eta_n(t))= \textbf{0}, \nabla\cdot\textbf{u}=0).
\end{align*}
Note that, $m(\varphi)M''(\varphi_n) = m(\varphi)(M''(\varphi_n) - M''(\varphi)) + 1 $. This leads to,
\begin{align*}
\int\limits_{t_{n-1}}^{t}(m(\varphi)(F''(\varphi)+a)\nabla\varphi, M''(\varphi_n)\nabla\varphi_n\eta_n^2) &= \int\limits_{t_{n-1}}^{t}\int_{A_n} (F''(\varphi)+a)|\nabla\varphi_n|^2\eta_n^2  \\& \hspace{.5cm} + \int\limits_{t_{n-1}}^{t}\int_{A_n}m(\varphi)(M''(\varphi_n)-M''(\varphi))(F''(\varphi)+a)|\nabla\varphi_n|^2\eta_n^2.
\end{align*}
Using assumption \textbf{[A3]} and $\varphi \geq 1-2\delta$ on $A_n$, we obtain,
\begin{align*}
    \int\limits_{t_{n-1}}^{t}\int_{A_n} (F''(\varphi)+a)|\nabla\varphi_n|^2\eta_n^2 &\geq F''(1-2\delta)\int\limits_{t_{n-1}}^{t}\int_{A_n} |\nabla\varphi_n|^2\eta_n^2, 
    \\\int\limits_{t_{n-1}}^{t}\int_\Omega m(\varphi)(M''(\varphi_n)-M''(\varphi))(F''(\varphi)+a)|\nabla\varphi_n|^2\eta_n^2 &\leq \int\limits_{t_{n-1}}^{t}\int_{A_n}|\lambda(\varphi) + ma|_{L^\infty}|M'''(\varphi_n)|(\varphi-\varphi_n)|\nabla\varphi_n|^2\eta_n^2\\  &\leq |\lambda(\varphi) + ma|_{L^\infty}\lVert M'''(\varphi_n)\rVert_{L^\infty}\int\limits_{t_{n-1}}^{t}\int_{A_n} |\nabla\varphi_n|^2\eta_n^2.
\end{align*}
Last inequality is obtained using the fact that, on set $A_n$, $(\varphi-\varphi_n) = k_n \leq 1-\delta$. Applying Young's inequality, we obtain,

\begin{align*}
    \int\limits_{t_{n-1}}^t \big( m(\varphi)((\nabla a)\varphi &- \nabla J*\varphi) ,M''(\varphi_n)\nabla\varphi_n\big)\eta_n^2 \leq \int\limits_{t_{n-1}}^t  \big(m(\varphi)((\nabla a)\varphi - \nabla J*\varphi) M''(\varphi_n)\eta_n, \nabla\varphi_n\eta_n\big)\\
    &\leq \frac{F''(1-2\delta)}{2}\int\limits_{t_{n-1}}^{t}\lVert\nabla\varphi_n\rVert^2\eta_n^2 + \frac{1}{F''(1-2\delta)}\lVert m(\varphi)\rVert_{L^\infty}^2(\lVert \nabla a\rVert_{L^\infty}^2+ \lVert\nabla J\rVert_{L^1}^2)\lVert M''(\varphi_n)\rVert_{L^\infty}^2 y_n.
\end{align*}
Combine all above estimate and from \eqref{eq3.3}, we have, 
\begin{align}\label{eq3.04}
  \int_{A_n} M(\varphi_n(x))dx &+ \frac{F''(1-2\delta)}{2} \int\limits_{t_{n-1}}^{t}\|\nabla\varphi_n\|^2\eta_n^2 \leq |\lambda+ ma|_{L^\infty}\lVert M'''(\varphi_n)\rVert_{L^\infty}\int\limits_{t_{n-1}}^{t}\int_{A_n} |\nabla\varphi_n|^2\eta_n^2
  \nonumber\\& \hspace{.5cm}+\Big[\frac{2^{n+1}}{\widetilde{\tau}}\lVert M(\varphi_n)\rVert_{L^\infty} + \frac{\lVert m\rVert_{L^\infty}^2(\lVert \nabla a\rVert_{L^\infty}^2+ \lVert\nabla J\rVert_{L^1}^2)}{F''(1-2\delta)}\lVert M''(\varphi_n)\rVert_{L^\infty}^2  \Big]y_n. 
\end{align}
Adding \eqref{eq3.2} and \eqref{eq3.04}, we get,
\begin{align}\label{eq3.05}
  \frac{1}{2}\lVert\varphi_n(t)\rVert^2 &+ \int_{A_n} M(\varphi_n(x))dx + \bigg(\frac{F''(1-2\delta)}{4} + \alpha_1 \bigg)\int\limits_{t_{n-1}}^{t}\lVert\nabla\varphi_n\rVert^2\eta_n^2
  \nonumber\\\hspace{.5cm}&\leq |\lambda+ ma|_{L^\infty}\lVert M'''(\varphi_n)\rVert_{L^\infty} \int\limits_{t_{n-1}}^{t}\lVert\nabla\varphi_n\rVert^2\eta_n^2 + \Big( \frac{2^{n+3}\delta^2}{\widetilde{\tau}} + \frac{2\|m\|_{L^\infty}^2(\lVert\nabla a\rVert_{L^\infty}^2 + \lVert\nabla J\rVert_{L^1}^2)}{F''(1-2\delta)}\Big) y_n \nonumber\\ &\hspace{.5cm}+\Big(\frac{2^{n+1}}{\widetilde{\tau}}\lVert M(\varphi_n)\rVert_{L^\infty} + \frac{\lVert m\rVert_{L^\infty}^2(\lVert \nabla a\rVert_{L^\infty}^2+ \lVert\nabla J\rVert_{L^1}^2)}{F''(1-2\delta)}\lVert M''(\varphi_n)\rVert_{L^\infty}^2  \Big)y_n \nonumber\\\hspace{.5cm}&\leq |\lambda+ ma|_{L^\infty}\lVert M'''(\varphi_n)\rVert_{L^\infty} \int\limits_{t_{n-1}}^{t}\lVert\nabla\varphi_n\rVert^2\eta_n^2 \nonumber\\ &\hspace{.5cm}
  +\Big[\frac{2^{n+1}}{\widetilde{\tau}}\big((2\delta)^2 + \lVert M(\varphi_n)\rVert_{L^\infty}\big) + \frac{\lVert m\rVert_{L^\infty}^2(\lVert \nabla a\rVert_{L^\infty}^2+ \lVert\nabla J\rVert_{L^1}^2)}{F''(1-2\delta)}\big(2 + \lVert M''(\varphi_n)\rVert_{L^\infty}^2 \big) \Big]y_n. 
  \end{align}
  
We choose a $\delta < \frac{1}{4}$, then we have, $0 \leq \varphi_n \leq 2\delta < \frac{1}{2},\,\, \forall n$. Since $M\in C^3(-1,1)$, $M$ and its derivatives upto order 3 are bounded on $[0,\frac{1}{2}]$. Let $K$ be a constant such that
$\max\{|M(s)|, |M'(s)|, |M''(s)|, |M'''(s)|\}\leq K$ for $ s\in [0,\frac{1}{2}]$. 
Additionally, we set the following condition on the choice of $\delta$.
\begin{align}\label{eq3.08}
    \frac{1}{F''(1-2\delta)} &\leq \max\Bigg\{ \frac{1}{8|\lambda+ ma|_{L^\infty}K}, \hspace{.25cm}\frac{\widetilde{\tau}}{4|\Omega|},\hspace{.25cm} \frac{2}{(2+K^2)\widetilde{\tau}\lVert m\rVert_{L^\infty}^2(\lVert \nabla a\rVert_{L^\infty}^2+ \lVert\nabla J\rVert_{L^1}^2)}\Bigg\},
\end{align}
The blow-up of $F''(s)$ near the point $s=\pm 1$ ensures the existence of a $\delta> 0$ satisfying the above condition. Later, we will refine the condition on $\delta$.  
Observe that, using \eqref{eq3.08}, the first term on R.H.S of the inequality \eqref{eq3.05} can be absorbed to its L.H.S. 
Now let us  define,
\begin{align*} 
P_n &= 2^{n+1} \frac{2}{\widetilde{\tau}}\big(1 + K\big) y_n = 2^{n+1} C y_n. 
\end{align*}
where $C$ is a constant independent of both $\delta$ and $n$. Then from \eqref{eq3.05} it follows that, 
\begin{equation*} 
\max_{t\in I_{n+1}} \lVert\varphi_n(t)\rVert^2 \leq P_n,   \hspace{.5cm}\text{ and } \hspace{.5cm} \frac{F''(1-2\delta)}{8} \int_{I_{n+1}}\lVert\nabla\varphi_n\rVert^2 \leq P_n.
\end{equation*}
\\We will derive a lower bound for $\varphi_n$ using the recursive definition.
\begin{align*}
    \varphi_n &=\varphi - \big(1-\delta-\frac{\delta}{2^n}\big) = \varphi -\big(1-\delta -\frac{\delta}{2^{n+1}}\big)-\frac{\delta}{2^{n+1}}+ \frac{\delta}{2^n}\\ &=\varphi_{n+1} + \frac{\delta}{2^{n+1}} \geq \frac{\delta}{2^{n+1}},
\end{align*}
since $\varphi_{n}\geq 0, \,\, \forall n$. Now using the result that for any $ p>1 $ and $ s, t > 0$, $(s+t)^p \leq 2^{p-1}(s^p + t^p)$, and applying H\"older's, Gagliardo-Nirenberg inequalities we have,
\begin{align}
    \Big(\frac{\delta}{2^{n+1}}\Big)^3y_{n+1} &\leq \int_{I_{n+1}}\int_\Omega |\varphi_n|^3 dxds\nonumber\\
    &\leq \Big(\int_{I_{n+1}}\int_\Omega |\varphi_n|^4 dxds\Big)^{3/4}\Big(\int_{I_{n+1}}\int_{A_n} 1 dxds\Big)^{1/4}\nonumber\\
    &\leq \Big(2^3\int_{I_{n+1}}\int_\Omega |\varphi_n-\Bar{\varphi_n}|^4 dxds + 2^3\int_{I_{n+1}}\int_\Omega |\Bar{\varphi_n}|^4 dxds\Big)^{3/4}\Big(\int_{I_{n+1}}\int_{A_n} 1 dxds\Big)^{1/4}\nonumber\\
    &\leq 2^{9/4}y_n^{1/4}\Big( \underbrace{\int_{I_{n+1}}\int_\Omega |\Bar{\varphi_n}|^4 dxds}_{S_1} + c\underbrace{\int_{I_{n+1}}\lVert\nabla\varphi_n\rVert^2\lVert\varphi_n\rVert^2}_{S_2}\Big)^{3/4}.\label{eq3.09}
\end{align}
\begin{align*}
    S_1 &= \int_{I_{n+1}}\int_{\Omega}\frac{1}{|\Omega|^4}\lVert\varphi_n\rVert_{L^1(\Omega)}^4ds = \int_{I_{n+1}}|\Omega|^{-3}\lVert\varphi_n\rVert_{L^1(\Omega)}^4ds = \frac{1}{|\Omega|}\int_{I_{n+1}}\lVert\varphi_n\rVert_{L^2(\Omega)}^4ds\\ &\leq \frac{1}{|\Omega|}\Big( \max_{t\in I_{n+1}}\lVert\varphi_n\rVert_{L^2(\Omega)}^2\Big)^2\int_{I_{n+1}}1 ds \leq \frac{2\widetilde{\tau}}{|\Omega|}P_n^2,\\
    S_2 &= \int_{I_{n+1}}\lVert\nabla\varphi_n\rVert^2\lVert\varphi_n\rVert^2 \leq \frac{1}{F''(1-2\delta)}\Big( \max_{t\in I_{n+1}}\lVert\varphi_n\rVert^2\Big)\Big( F''(1-2\delta) \int_{I_{n+1}}\lVert\nabla\varphi_n\rVert^2\Big) \\&\leq \frac{1}{F''(1-2\delta)}8P_n^2 \leq \frac{2\widetilde{\tau}}{|\Omega|}P_n^2.
\end{align*}
Combining above estimates in \eqref{eq3.09} we get,
\begin{align}\label{eq3.9}
    y_{n+1} &\leq \frac{2^{3(n+1)}}{\delta^3}2^{9/4}\Big( \frac{4\widetilde{\tau}}{|\Omega|}\Big)^{3/4}P_n^{3/2}y_n^{1/4}\\ \label{3.10}
    &\leq \frac{2^\frac{33}{4}C^{\frac{3}{2}}}{\delta^3}\Big(\frac{\widetilde{\tau}}{|\Omega|}\Big)^{\frac{3}{4}} 2^\frac{9n}{2}y_n^{\frac{7}{4}}.
\end{align}
Our aim is to apply Lemma 3.2 to the inequality \eqref{eq3.9} and obtain $y_n \rightarrow 0$ as $n\rightarrow \infty$. To apply this lemma, we require $ y_{n+1} \leq C b^n {y_n}^{1 + \epsilon} $ and a condition on $y_0$ namely,
\begin{align}\label{eq3.10}
    y_0 \leq C^{-1/ {\epsilon} }b^{-1/{\epsilon^2}}.
\end{align}
By comparing with \eqref{3.10}, we can identify $C, b \, \text{and} \,  \epsilon$ as
\begin{align*}
    C= \frac{2^\frac{33}{4}C^{\frac{3}{2}}}{\delta^3}\Big(\frac{\widetilde{\tau}}{|\Omega|}\Big)^{\frac{3}{4}}, \hspace{.5cm} b= 2^{\frac{9}{2}}, \hspace{.5cm} \epsilon =\frac{3}{4}.
\end{align*}
Thus it remains to prove that a choice of $\delta$ which satisfies \eqref{eq3.10} can be made. Note that in the set, $A_0$, $\varphi \geq 1-2\delta$ and since $F'$ is a nondecreasing function on the interval $[1-\epsilon_0, 1]$, we have $F'(1-2\delta) \leq F'(\varphi)$.
\begin{align*}
y_0 = \int\limits_{T-3\widetilde{\tau}}^{T}\int_{A_0} 1 dxds &\leq \frac{1}{|F'(1-2\delta)|}\int\limits_{T-3\widetilde{\tau}}^{T}\|F'(\varphi(s))\|_{L^1(\Omega)} ds\\
&\leq \frac{1}{|F'(1-2\delta)|}3\widetilde{\tau}\widetilde{C}.
\end{align*}
which is obtained by applying the global estimate for $\|F'(\varphi)\|_{L^1(\Omega)}$, \eqref{eq3.010}. Therefore, the condition, \eqref{eq3.10} is satisfied if,
\begin{align}\label{d4}
 \frac{1}{|F'(1-2\delta)|\delta^4}&\leq \frac{|\Omega|}{2^{19} 3C^2\widetilde{C}\widetilde{\tau}^2}.
\end{align}
Observe that $F'$ is a logarithmic function and as $\delta \rightarrow 0$, $|F'(1-2\delta)|$ grows faster than the decay of $\delta^4$. Hence, we can always find a $\delta >0$ that satisfies \eqref{d4}. Thus, we choose a $\delta <\frac{1}{4}$  satisfying \eqref{d4} and \eqref{eq3.08}.
Applying lemma 3.2 to inequalities \eqref{eq3.9}, \eqref{eq3.10}, we obtain,
\begin{align*}
    y_n \leq y_0(2^\frac{9}{2})^{-\frac{4n}{3}} = y_0 2^{-6n}.
\end{align*}
Therefore, $y_n \rightarrow 0$ as $ n\rightarrow \infty$ and 
$$ |\varphi(x,t)| \leq 1-\delta, \hspace {.5cm}\text{ for a.e } x\in \Omega \text{ and } t\in [T-\widetilde{\tau}, T].$$
In particular, we choose $T= 2\tau$ and $\widetilde{\tau} = \frac{\tau}{3}$, we get,
$$ -1+ \delta \leq \varphi(x,t) \leq 1-\delta, \hspace{.5cm} \text{ for a.e } x\in\Omega \text{ and } t\in [\frac{5\tau}{3}, 2\tau].$$
Similarly, we can repeat the argument for any interval of the form $[\frac{(5+n)\tau}{3}, (2+n)\tau]$. Hence \eqref{eq3.01} holds.

\end{proof}

With a slight modification in the above calculation, we will be able to prove the separation property for the potential \eqref{eq08}. We state the result with a brief sketch of the proof. 
\begin{remark}
Let $\varphi_0\in L^\infty(\Omega)$ be such that $\lVert\varphi_0\rVert_{L^\infty} < 1$ and $|\Bar{\varphi_0}|<1$. Then for any $\tau>0$, $\exists \; \delta\in (0,1)$ which depends on $\tau$ and $\varphi_0$ such that the unique solution to \eqref{eq1}-\eqref{eq07} with potential, \eqref{eq08} and mobility, \eqref{eq1.11} satisfies,
\begin{align}\label{eq3.013}
\delta \leq \varphi(x,t) \leq 1-\delta, \hspace{.5cm} \text{ a.e  in } \Omega\times [\tau, \infty).
\end{align}

For, with potential \eqref{eq08} and mobility, \eqref{eq1.11} we get $\varphi \in [0, 1] \; a.e. $ and a transformation $\psi= (2\varphi-1)$ changes the potential \eqref{eq08} to \eqref{eq09} with the property that $ \psi \in [-1,1]$. 
Consequently, we have the separation property for the system \eqref{eq1}-\eqref{eq07} with potential \eqref{eq08}.
\end{remark}

\begin{concluding}
Our proof for the separation property completely relies on the blow-up of derivatives of the potential at pure phases and the global estimate for the $L^p$ norm of the derivative of the potential. Note that we do not require any condition on third or higher derivatives of the potential. Even though we have done the proof for a specific potential, we use the information on the potential only at the last step to verify a condition on $y_0$. Rest of the calculations holds for any potential $F$ and mobility $m$ satisfying assumptions $\textbf{[A1]}-\textbf{[A4]}$. An advantage of the method is that we can explicitly calculate the separation scales. Although we have a coupled system, we require only an appropriate regularity of $u$. Then, the proof is similar to that of the CH equation. So far, there are no works on the separation property for the Cahn Hilliard equation with non-constant mobility other than \cite{GGF}, which is for a specific potential. We were trying to do this work in a general setting. However, to verify \eqref{eq3.10}, we have to use the global estimate for the derivative of the potential. The assumptions we set on $F$ are insufficient to derive a uniform bound for the norm of $F'(\varphi)$ or $F''(\varphi)$. Here, we restrict ourselves to a specific potential \eqref{eq09}.
\end{concluding}

\nocite{*}
\printbibliography

\end{document}